\documentclass[a4paper,11pt,leqno]{amsart}

\usepackage{amsmath}
\usepackage{amsthm}
\usepackage{amsfonts}
\usepackage{amssymb}

\usepackage{enumerate}
\usepackage{colonequals}

\newcommand{\inv}{^{-1}}
\newcommand{\eps}{\varepsilon}
\newcommand{\N}{\mathbb N}
\newcommand{\R}{\mathbb R}

\newtheorem{theorem}{Theorem}[section]
\newtheorem*{theorem*}{Theorem}{\bf}{\it}
\newtheorem{proposition}[theorem]{Proposition}
\newtheorem*{proposition*}{Proposition}{\bf}{\it}
\newtheorem{lemma}[theorem]{Lemma}
\newtheorem*{lemma*}{Lemma}{\bf}{\it}
\newtheorem{corollary}[theorem]{Corollary}
\theoremstyle{definition}
\newtheorem{definition}[theorem]{Definition}
\newtheorem*{definition*}{Definition}
\theoremstyle{remark}
\newtheorem{remark}[theorem]{Remark}
\numberwithin{equation}{section}

\begin{document}

\title[A characterization of BLD-mappings]{A characterization of BLD-mappings between metric spaces}
\author{Rami Luisto}
\address{Department of Mathematics and 
  Statistics, P.O. Box 68 (Gustaf H\"allstr\"omin katu 2b), 
  FI-00014 University of Helsinki, Finland}
\email{rami.luisto@iki.fi}
\thanks{The author is supported by
 the V\"ais\"al\"a foundation.}

\subjclass[2010]{30L10, 30C65, 57M12}

\maketitle

\begin{abstract}
  We prove a characterization theorem for 
  BLD-mappings between complete locally compact
  path-metric spaces.
  As a corollary we obtain a sharp
  limit theorem for BLD-mappings.
\end{abstract}

\section{Introduction}
\label{sec:Intro}

The class of BLD-mappings was introduced in
\cite{MartioVaisala} as mappings that 
preserve solutions of certain elliptic partial differential 
equations. In that paper Martio and V\"ais\"al\"a 
showed, among other results, that the class
of BLD-mappings has several equivalent definitions. In this
paper we use the following geometric definition. 
For the definitions of 
the length of a path,
path-metric spaces
and branched covers, see Section \ref{sec:Preli}.
\begin{definition*}
  Given $L \geq 1$, a branched cover $ f \colon X \to Y$ between metric spaces
  is a mapping of \emph{Bounded Length Distortion},
  or ($L$-)BLD for short, if
  for all paths $\gamma \colon [0,1] \to X$, we have
  \begin{align*}
    L \inv \ell(\gamma)
    \leq \ell(f \circ \gamma)
    \leq L \ell(\gamma).
    \tag{BLD}
  \end{align*}
\end{definition*}

In \cite{MartioVaisala} Martio and V\"ais\"al\"a defined BLD-mappings
as a subclass of the so called \emph{quasiregular mappings}, see e.g.\
\cite{MartioVaisala} or \cite{Rickman-book},
and showed that this analytic definition
is equivalent to the geometric definition given above. There are, 
however, also other characterizations of BLD-mappings 
between Euclidean spaces in \cite{MartioVaisala}
although they are not explicitly stated as such.
In this paper we state these characterizing properties 
and prove a characterization (Theorem \ref{thm:BLDDefinitions}),
which shows that the equivalent definitions for BLD-mappings
in \cite{MartioVaisala} hold true also in the setting of complete
locally compact path-metric spaces.

Note that the path-length condition (BLD)
is required for \emph{all} paths. This requirement
implies that the lifts of rectifiable paths are also
rectifiable. (For terminology, see Section \ref{sec:Preli}.)
Requiring the path-length condition (BLD)
only for rectifiable paths 
gives rise to the class of \emph{weak BLD-mappings},
that have been studied e.g.\ by Haj{\l}asz and Malekzadeh, 
see \cite{MalekzadehHajlasz2}.
In Euclidean spaces the two definitions are equivalent
(see \cite{MalekzadehHajlasz})
but, in general, weak BLD-mappings form a strictly larger class of mappings.
For example the identity map $\mathbb{H}_1 \to \R^3$ from the first
Heisenberg group to the Euclidean $3$-space is a weak BLD-mapping but
not a BLD-mapping.

A mapping $f \colon X \to Y$ between metric
spaces is 
\emph{$L$-LQ (Lipschitz Quotient)} if, 
for all $x \in X$ and $r>0$,
\begin{align*}
  B_Y(f(x),L \inv r)
  \subset f \left(B_X(x,r)\right)
  \subset B_Y(f(x),L r).
  \tag{LQ}
\end{align*}
LQ-mappings were introduced with this name in
\cite{BJLPS}. Note, however, that
Martio and V\"ais\"al\"a show already in \cite[Lemma 4.6.]{MartioVaisala}
that BLD-mappings satisfy the (LQ) property locally in Euclidean domains.
For mappings between complete and locally
compact path-metric spaces the
definition of $L$-LQ-mappings is equivalent to
a local one; see Lemma \ref{lemma:LQlocal}.
The definition of LQ-mappings immediately yields that 
LQ-mappings are open, but they are not necessarily
discrete; the projection map $\R^2 \to \R$ is a trivial example,
but see also \cite{Csornyei} for
a construction of Cs\"ornyei for an LQ-mapping $f \colon \R^3 \to \R^2$
with a point $P \in \R^2$ such that $f \inv (\{ P \})$ contains a plane.

Let $f \colon X \to Y$ be a continuous mapping between 
path-connected metric spaces.
As in \cite[II.4]{Rickman-book} and
\cite[p. 491]{HeinonenRickman} we set
\begin{align*}
  L(x,f,r) 
  &\colonequals \sup \{ d(f(x),f(y)) \mid y \in \partial B(x,r) \}, \\
  l(x,f,r) 
  &\colonequals \inf \{ d(f(x),f(y)) \mid y \in \partial B(x,r) \}, \\
  L^\ast(x,f,r) 
  &\colonequals \sup \{ d(x,y) \mid y \in \partial U(x,f,r) \},
  \intertext{and}
  l^\ast(x,f,r) 
  &\colonequals \inf \{ d(x,y) \mid y \in \partial U(x,f,r) \},
\end{align*}
where $U(x,f,r)$ the component of $f \inv \left(B(f(x),r)\right)$ containing
$x$. Note that since $X$ is a path-metric space, for every $x \in X$
$\partial U(x,f,r) \neq \emptyset$ for all $r>0$ small enough
when $f$ is not a constant map.

A mapping $f \colon X \to Y$ is \emph{$L$-radial}
if for all $x \in X$ there exists a radius $r_0 > 0$ such that
for all $r < r_0$
\begin{align*}\tag{R}
  L(x,f,r) 
  \leq Lr 
  \quad
  \text{ and }
  \quad
  l(x,f,r) 
  \geq L \inv r.
\end{align*}
An equivalent definition
for radial mappings is given in Lemma \ref{lemma:RadialityDef2}.

Likewise we say that a branched cover $f \colon X \to Y$ is \emph{$L$-coradial}
if for all points $x \in X$ there exists a radius $r_0 > 0$
such that for all $r < r_0$
\begin{align*}\tag{R*}
  L^\ast(x,f,r) 
  \leq Lr 
  \quad
  \text{ and }
  \quad
  l^\ast(x,f,r) 
  \geq L \inv r.
\end{align*}
As branched covers 
coradial mappings are continuous, open and discrete by definition.
For radial mappings the radiality condition (R) immediately implies 
that an $L$-radial mapping is both discrete and locally $L$-Lipschitz.
On the other hand radial maps are not
necessarily open, as the example $(x,y) \mapsto (|x|,y)$
defined in the Euclidean plane shows. 

Our first main theorem is the following characterization.
\begin{theorem}\label{thm:BLDDefinitions}
  Let $f \colon X \to Y$ be a continuous mapping
  between two complete locally compact
  path-metric spaces and $L \geq 1$. Then the following are
  equivalent:
  \begin{enumerate}[(i)]
  \item $f$ is an $L$-BLD-mapping,
  \item $f$ is a discrete $L$-LQ-mapping,
  \item $f$ is an open $L$-radial mapping, and
  \item $f$ is an $L$-coradial mapping.
  \end{enumerate}
\end{theorem}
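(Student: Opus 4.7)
The plan is to prove the cycle of implications $(i) \Rightarrow (ii) \Rightarrow (iv) \Rightarrow (iii) \Rightarrow (i)$. For $(i) \Rightarrow (ii)$, discreteness comes from the branched cover hypothesis. The outer LQ inclusion $f(B(x,r)) \subset B(f(x),Lr)$ is immediate from the path-metric structure of $X$: any $y \in B(x,r)$ is joined to $x$ by paths of length approaching $d(x,y)$, and the BLD upper bound yields $d(f(x),f(y)) < Lr$. The inner inclusion $B(f(x),L\inv r) \subset f(B(x,r))$ requires path-lifting: for $y' \in B(f(x),L\inv r)$, pick a short path $\beta$ in $Y$ from $f(x)$ to $y'$, lift it to a path $\alpha$ starting at $x$ via the standard path-lifting theorem for branched covers between complete locally compact spaces (presumably developed in Section \ref{sec:Preli}), and apply the BLD upper bound to $\alpha$ to force its endpoint into $B(x,r)$.

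For $(ii) \Rightarrow (iv)$ and $(iv) \Rightarrow (iii)$, both transitions amount to comparing the radial quantities $L,l$ with the coradial quantities $L^{\ast}, l^{\ast}$. The lower coradial bound $l^{\ast}(x,f,r) \geq L\inv r$ follows because the connected set $B(x, L\inv r)$, containing $x$ and mapped by outer LQ into $B(f(x),r)$, must lie in the component $U(x,f,r)$. The upper coradial bound $L^{\ast}(x,f,r) \leq Lr$ is subtler; I would exploit the inner LQ inclusion at putative far-away boundary points of $U(x,f,r)$, combined with discreteness of $f\inv(f(x))$ and local compactness, to conclude that for small $r$ the component $U(x,f,r)$ is a relatively compact normal neighborhood with boundary confined to $\overline{B(x,Lr)}$. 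Reversing the roles, coradiality recovers radiality via the sandwich $B(x, L\inv r) \subset U(x,f,r) \subset \overline{B(x,Lr)}$ together with $f(U(x,f,r)) = B(f(x),r)$: the lower sandwich implies $L(x,f,s) \leq Ls$ for $s = L\inv r$, and a point $y \in \partial B(x,s)$ with $d(f(x),f(y)) < L\inv s$ would lie in $U(x,f,L\inv s)$ and violate the upper sandwich.

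The final implication $(iii) \Rightarrow (i)$ is the main obstacle, as it upgrades pointwise metric control into a global path-length estimate. Openness is assumed, and radiality forces discreteness, so $f$ is a branched cover. The upper bound $\ell(f\circ\gamma) \leq L\ell(\gamma)$ is obtained by partitioning $\gamma$ into arcs of diameter less than the local radial scale $r_0$, applying $L(x,f,r) \leq Lr$ segmentwise, and passing to the supremum over partitions; uniformity of $r_0$ along the compact image of $\gamma$ is secured by local compactness. The lower bound $\ell(f\circ\gamma) \geq L\inv\ell(\gamma)$ is the harder half and uses $l(x,f,r) \geq L\inv r$: for each sufficiently fine subdivision with $r = d(\gamma(t_i),\gamma(t_{i+1}))$, the path-metric assumption places $\gamma(t_{i+1})$ on $\partial B(\gamma(t_i),r)$, so radiality gives $d(f(\gamma(t_i)),f(\gamma(t_{i+1}))) \geq L\inv r$; summing and taking the supremum over partitions yields $\ell(f\circ\gamma) \geq L\inv\ell(\gamma)$. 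The delicate point here is to choose the partition fine enough at every $\gamma(t_i)$ simultaneously, which again rests on compactness of $\gamma([0,1])$.
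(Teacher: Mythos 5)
Your overall architecture (a single cycle $(i)\Rightarrow(ii)\Rightarrow(iv)\Rightarrow(iii)\Rightarrow(i)$ instead of the paper's two chains) is legitimate, and several steps are sound: $(i)\Rightarrow(ii)$ is essentially the standard lifting argument, and the sandwich $B(x,L\inv r)\subset U(x,f,r)\subset\overline{B}(x,Lr)$ is the right bridge between the radial and coradial conditions. The serious problem is at the crux of the theorem, $(iii)\Rightarrow(i)$, in the lower bound $\ell(f\circ\gamma)\geq L\inv\ell(\gamma)$. You want a partition that is ``fine enough at every $\gamma(t_i)$ simultaneously'' and assert that compactness of $\gamma([0,1])$ secures this. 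It does not: radiality only provides at each point $x$ \emph{some} radius $r_0(x)>0$, and the validity of (R$^{\#}$) at $x$ constrains only pairs $(x,y)$, not pairs $(x',y)$ with $x'$ near $x$, so the optimal radius has no lower semicontinuity and no covering or Lebesgue-number argument yields a positive uniform scale along the path. This failure is not hypothetical: for the planar winding map $(r,\theta)\mapsto(r,2\theta)$, which is $L$-BLD and hence open and $L$-radial, the admissible radius at a point $x\neq 0$ is at most comparable to $|x|$ (because of the second preimage at distance $2|x|$), so along any path through the origin the infimum of admissible radii is $0$. This is precisely the difficulty that Lemma \ref{lemma:RadialIsBLD} is designed to circumvent: one fixes the partition $K$, considers the set $A$ of parameters $a$ admitting a refinement $J(a)$ of $K\cap[0,a]$ satisfying the desired inequality, and shows $\sup A\in A$ and $\sup A=1$ by applying radiality only at the single point $\beta(\sup A)$ — a continuity-induction, not a uniform subdivision. (The Lipschitz upper bound has the same issue with your justification, though there the conclusion at least follows from the classical connectedness argument that locally $L$-Lipschitz maps on geodesic spaces are $L$-Lipschitz.)

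A secondary, fillable gap: in $(iv)\Rightarrow(iii)$ you place a point $y\in\partial B(x,s)$ with $d(f(x),f(y))<L\inv s$ into $U(x,f,L\inv s)$. Membership in $f\inv\left(B(f(x),L\inv s)\right)$ does not by itself put $y$ into the \emph{component} containing $x$; you need that $U(x,f,r)\cap f\inv\left(B(f(x),\rho)\right)$ is connected for $\rho<r$, which is Corollary \ref{coro:ConnectedPreImages} and rests on the path-lifting Proposition \ref{Proposition:Luisto}. Similarly, in your $(ii)\Rightarrow(iv)$ step the inner LQ inclusion should be applied either at a point of $U(x,f,r)$ at distance exactly $Lr$ from $x$ (reached along a path inside the component), or after invoking Lemma \ref{lemma:TopologicalNormalDomainLemma} to know a priori that $U(x,f,r)$ shrinks into a ball avoiding the other preimages of $f(x)$; applied at an arbitrary ``far-away'' boundary point, the requirement that $B(y,d(x,y))$ avoid $f\inv(\{f(x)\})$ presupposes the very bound on $d(x,y)$ you are trying to prove.
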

Theorem \ref{thm:BLDDefinitions} generalizes and extends an earlier result by
the author, see \cite[Theorem 1.1]{Luisto-NoteOnLocalToGlobal}.
As mentioned, this result
is known in the Euclidean setting \cite{MartioVaisala}. The implication
$(i) \Rightarrow (ii)$ is observed in the setting of generalized manifolds of type $A$
in \cite{HeinonenRickman}, and more recently the implication 
$(i) \Rightarrow (iii)$ is noted in a setting similar to \cite{HeinonenRickman} 
under additional assumptions on spaces $X$ and $Y$
by Guo and Williams \cite{GuoWilliams}.
The implication $(iii) \Rightarrow (ii)$ is implicitly due to
Lytchak in a purely metric setting without notions of branched covers, see
\cite[Section 3.1 and Proposition 4.3]{LytchakOpen}.
Furthermore in \cite[Theorem 4.5]{HeinonenRickman} it is shown that 
a mapping between 
quasiconvex generalized manifolds is BLD if 
and only if it is locally regular in the sense of David 
and Semmes, see \cite[Definition 12.1]{DavidSemmes}. 
That equivalence does not, however, preserve the
constant $L$.

Locally uniform limits of $L$-LQ mappings are $L$-LQ in a 
very general setting and so Theorem \ref{thm:BLDDefinitions}
yields that the $L$-BLD condition passes to limits of BLD-mapping
packages (defined in Section \ref{sec:Limits})
when the limiting map is discrete. More precisely, we have the following
theorem.
\begin{theorem}\label{thm:BLDLimit}
  Let $(X_j,x_j)$ and $(Y_j,y_j)$ be two pointed sequences
  of locally compact and complete path-metric spaces.
  Suppose the sequence of pointed mapping packages
  $\left( (X_j,x_j), (Y_j,y_j), f_j \right)$,
  where each 
  $f_j \colon (X_j,x_j) \to (Y_j,y_j)$ is $L$-BLD,
  converges to a mapping package
  $((X,x_0), (Y,y_0), f)$
  where $f$ is discrete.
  Then $f$ is $L$-BLD.
\end{theorem}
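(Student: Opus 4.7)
My plan is to reduce the theorem to the equivalence (i)$\Leftrightarrow$(ii) of Theorem \ref{thm:BLDDefinitions}. Since the limit $f$ is assumed discrete, it suffices to show that $f$ is $L$-LQ; the equivalence will then promote $f$ to an $L$-BLD-mapping. By Theorem \ref{thm:BLDDefinitions} applied to each $f_j$, every approximating map is already $L$-LQ, so the problem reduces to a stability statement: the $L$-LQ condition is preserved under pointed convergence of mapping packages between complete locally compact path-metric spaces.

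To verify this stability, fix $x \in X$ and $r > 0$. For the outer inclusion $f(B_X(x,r)) \subset B_Y(f(x), Lr)$, any $z \in B_X(x,r)$ is approximated by $z_j \to z$ with $f_j(z_j) \to f(z)$; together with base approximants $x_j \to x$ this gives $d_{X_j}(x_j, z_j) < r$ for large $j$, whence $d_{Y_j}(f_j(x_j), f_j(z_j)) < Lr$ by the $L$-LQ property of $f_j$. Passing to the limit yields $d_Y(f(x), f(z)) \leq Lr$, and applying the argument to radii $r' < r$ converging to $r$ gives the open-ball version. For the inner inclusion $B_Y(f(x), L\inv r) \subset f(B_X(x,r))$, pick $y$ with $d(f(x), y) < L\inv r' < L\inv r$, approximate by $y_j \to y$, and use $L$-LQ of $f_j$ to produce $z_j \in B_{X_j}(x_j, r')$ with $f_j(z_j) = y_j$. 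Since each $X_j$ is complete, locally compact and path-metric, closed balls of bounded radius are compact by Hopf--Rinow--Cohn--Vossen, and the definition of pointed mapping package convergence then lets one extract a subsequence $z_j \to z \in \overline{B_X(x, r')} \subset B_X(x,r)$ with $f(z) = y$.

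The main technical obstacle is this last extraction of a convergent subsequence across the varying spaces $X_j$: one needs the preimages $z_j$ to live in a region that is uniformly compact along the sequence and to admit a limit point lying in the limit space $X$. Both are consequences of properness of complete locally compact path-metric spaces combined with the convergence of the mapping packages, but they must be formalized carefully against the definitions in Section \ref{sec:Limits}. Once $f$ is known to be $L$-LQ, the discreteness hypothesis together with Theorem \ref{thm:BLDDefinitions}(ii)$\Rightarrow$(i) immediately yields that $f$ is $L$-BLD, completing the proof.
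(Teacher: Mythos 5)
Your proposal is correct and follows the same route as the paper: the paper likewise reduces the theorem to the implication (ii)$\Rightarrow$(i) of Theorem \ref{thm:BLDDefinitions} and isolates the stability of the $L$-LQ condition under pointed mapping-package convergence as Lemma \ref{lemma:LQLimit}, whose proof is exactly the Lipschitz/co-Lipschitz argument with a compactness extraction that you sketch. The technical point you flag (extracting a limit of the preimages $z_j$ across the varying spaces) is handled in the paper precisely via properness of the $X_j$ and the quasi-isometries of Definition \ref{def:GH}, as you anticipate.
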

As an immediate corollary we get a result for fixed spaces.
\begin{corollary}
  Let $X$ and $Y$ be locally compact complete path-metric spaces
  and suppose $(f_j)$ is a sequence of $L$-BLD-mappings
  $X \to Y$ converging pointwise to a
  continuous discrete mapping $f \colon X \to Y$.
  Then $f$ is $L$-BLD.
\end{corollary}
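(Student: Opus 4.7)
The plan is to reduce the corollary directly to Theorem \ref{thm:BLDLimit} by exhibiting $(f_j)$ as a convergent sequence of pointed mapping packages over the constant spaces. Fix any basepoint $x_0 \in X$ and set $x_j := x_0$, $y_j := f_j(x_0)$, and $y_0 := f(x_0)$. Pointwise convergence of $f_j$ at $x_0$ gives $y_j \to y_0$ in $Y$, and since the underlying spaces are fixed, locally compact, and complete path-metric, the pointed Gromov--Hausdorff data required for the convergence of the packages $((X,x_0),(Y,y_j),f_j)$ to $((X,x_0),(Y,y_0),f)$ is automatic.

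The substantive step is to upgrade pointwise convergence $f_j \to f$ to the locally uniform convergence demanded by the notion of package convergence in Section \ref{sec:Limits}. For this I would observe that every $L$-BLD mapping is $L$-Lipschitz: for any two points $x,y \in X$ and any path $\gamma$ from $x$ to $y$ one has $d(f_j(x),f_j(y)) \leq \ell(f_j \circ \gamma) \leq L\,\ell(\gamma)$, and taking the infimum over paths in the path-metric space $X$ gives $d(f_j(x),f_j(y)) \leq L\,d(x,y)$ (equivalently, this is immediate from the $L$-LQ condition supplied by Theorem \ref{thm:BLDDefinitions}). Hence the family $(f_j)$ is equi-Lipschitz. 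Combined with local compactness of $X$ and pointwise convergence, a routine Arzel\`a--Ascoli argument then promotes $f_j \to f$ to locally uniform convergence on compact subsets of $X$.

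With this convergence established, the hypotheses of Theorem \ref{thm:BLDLimit} are met: the $f_j$ are $L$-BLD, the packages converge, and the limit $f$ is continuous and discrete by assumption. The theorem therefore yields that $f$ is $L$-BLD. The only potential friction point is the bookkeeping needed to verify that package convergence, in the degenerate case of fixed underlying spaces, reduces exactly to convergence of the basepoints $y_j \to y_0$ together with locally uniform convergence of the maps; this should follow by unwinding the definition of mapping package convergence given in Section \ref{sec:Limits}, and poses no real obstacle.
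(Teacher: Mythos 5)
Your proposal is correct and follows the same route the paper intends: the corollary is obtained by viewing $(f_j)$ as a sequence of mapping packages over the constant spaces $(X,x_0)$ and $(Y,y_j)$ and invoking Theorem \ref{thm:BLDLimit}, the paper itself noting after Definition \ref{def:GH} that for fixed spaces condition (GH-ii) reduces to pointwise convergence. Your extra step upgrading pointwise to locally uniform convergence via the equi-Lipschitz bound is sound (though a direct triangle-inequality estimate $d(f_j(a_j),f(a)) \leq L\,d(a_j,a) + d(f_j(a),f(a))$ would replace the Arzel\`a--Ascoli appeal) and harmlessly fills in the bookkeeping the paper leaves implicit.
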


In the setting of path-metric generalized manifolds of type $A$, 
we may deduce
the discreteness of a limit of $L$-BLD mappings 
from uniform bounds for local multiplicity. Thus
in this setting we have the following stronger result.
\begin{theorem}\label{thm:BLDLimit2}
  Let $(M_j,x_j)$ and $(N_j,y_j)$ be two pointed sequences
  of path-metric generalized $n$-manifolds of type $A$ 
  with uniform constants.
  Suppose the sequence of pointed mapping packages
  $ \left( (M_j,x_j), (N_j,y_j), f_j \right)$
  converges to a mapping package
  $((M,x_0), (N,y_0), f)$, 
  where each $f_j \colon M_j \to N_j$ is
  $L$-BLD. Then $f$ is $L$-BLD.
\end{theorem}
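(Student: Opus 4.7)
By Theorem \ref{thm:BLDLimit}, the $L$-BLD conclusion will follow once we establish that the limit map $f$ is discrete. Thus the whole argument reduces to extracting discreteness of $f$ from the uniform type $A$ assumption; there is no need to redo the BLD analysis.

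The plan is to exploit a uniform local multiplicity bound. For $L$-BLD mappings between path-metric generalized $n$-manifolds of type $A$, classical estimates (going back to Heinonen--Rickman and combining Ahlfors regularity with the LQ property available from Theorem \ref{thm:BLDDefinitions}) furnish an integer $N = N(L, A, n)$ and a radius $\rho_0 = \rho_0(L, A, n) > 0$ such that for every $x \in M_j$ the local multiplicity of $f_j$ on $B(x, \rho_0)$ is at most $N$. Crucially, the uniformity of the type $A$ constants across the sequence makes $N$ and $\rho_0$ independent of $j$.

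With this bound in hand I argue by contradiction. Suppose $f$ fails to be discrete at some point $x \in M$, so that a fiber $f^{-1}(y)$ contains distinct points $x_1, \ldots, x_{N+1}$ with pairwise distances at least some $\delta_0 > 0$ and all lying inside $B(x, \rho_0/3)$. Using the convergence of the pointed mapping packages I select approximating points $x_k^{(j)} \in M_j$ with $x_k^{(j)} \to x_k$, as well as a point $y^{(j)} \in N_j$ with $y^{(j)} \to y$. Since $f_j(x_k^{(j)}) \to f(x_k) = y$, the distances $d(f_j(x_k^{(j)}), y^{(j)})$ tend to $0$. Each $f_j$ is $L$-LQ by Theorem \ref{thm:BLDDefinitions}, so for large $j$ the ball $B(x_k^{(j)}, L \cdot d(f_j(x_k^{(j)}), y^{(j)}))$ contains a genuine preimage $\widetilde{x}_k^{(j)}$ of $y^{(j)}$ under $f_j$. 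For $j$ large enough these preimages are pairwise separated by at least $\delta_0/2$, hence distinct, and all lie inside $B(x^{(j)}, \rho_0)$. This produces $N+1$ preimages of $y^{(j)}$ under $f_j$ in a ball of radius $\rho_0$, contradicting the uniform multiplicity bound. Hence $f$ is discrete, and Theorem \ref{thm:BLDLimit} yields the conclusion.

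The main obstacle is the first substantive step: locating (or proving) a local multiplicity bound for BLD-mappings between type $A$ generalized manifolds whose constants depend only on $(L, A, n)$, together with an admissible radius $\rho_0$ that is uniform in $j$. Once this input is secured, the passage to the limit via the LQ property is routine; the only additional subtlety is making the mapping package approximations quantitatively precise enough to guarantee distinctness of the preimages in $M_j$, which is taken care of by the separation $\delta_0$ of the original $x_k$.
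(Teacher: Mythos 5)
Your proposal is correct and follows essentially the same route as the paper: reduce to showing $f$ is discrete (via Theorem \ref{thm:BLDLimit}, equivalently Lemma \ref{lemma:LQLimit} plus Theorem \ref{thm:BLDDefinitions}), invoke a uniform local multiplicity bound for $L$-BLD maps between type $A$ manifolds, and derive a contradiction by transporting $N+1$ separated preimages to $M_j$ through the quasi-isometries and using the co-Lipschitz half of (LQ) to produce $N+1$ preimages of a common point in $N_j$. The input you flag as the main obstacle is exactly \cite[Theorem 6.8]{HeinonenRickman}, which the paper also cites rather than proves, so there is no gap relative to the paper's own argument.
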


We again have a corresponding result for
fixed spaces as a corollary.
\begin{corollary}
  Let $M$ and $N$ be generalized $n$-manifolds
  of type $A$ and suppose
  $(f_j)$ is a sequence of $L$-BLD-mappings
  $M \to N$ converging pointwise to a
  continuous mapping $f \colon M \to N$.
  Then $f$ is $L$-BLD.
\end{corollary}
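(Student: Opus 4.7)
My plan is to derive this corollary as a direct specialization of Theorem~\ref{thm:BLDLimit2} applied to the constant sequences $M_j := M$ and $N_j := N$. The hypothesis that $M$ and $N$ are themselves generalized $n$-manifolds of type $A$ provides the required uniform constants trivially. I would fix any basepoint $x_0 \in M$ and set $y_j := f_j(x_0)$ and $y_0 := f(x_0)$; pointwise convergence of $f_j$ to $f$ immediately yields $y_j \to y_0$ in $N$.

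The one step that requires a small argument is to promote pointwise convergence to the notion of pointed mapping package convergence defined in Section~\ref{sec:Limits}. Since each $f_j$ is $L$-BLD it is in particular $L$-Lipschitz, so the family $(f_j)$ is uniformly equicontinuous. Equicontinuity together with pointwise convergence on the complete, locally compact path-metric space $M$ gives locally uniform convergence $f_j \to f$ by a standard Arzel\`a--Ascoli argument; no passage to a subsequence is needed, because the pointwise limit is already prescribed. For constant sequences of ambient spaces this locally uniform convergence, together with $y_j \to y_0$, is precisely what the pointed mapping package convergence from Section~\ref{sec:Limits} specializes to, as there are no Gromov--Hausdorff identifications of the spaces to choose.

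Having verified the hypotheses of Theorem~\ref{thm:BLDLimit2}, the conclusion that $f$ is $L$-BLD is immediate. I expect no genuine obstacle in this argument: the delicate ingredient is already packaged inside Theorem~\ref{thm:BLDLimit2}, namely the deduction of discreteness of $f$ from uniform local multiplicity bounds available in the type $A$ setting. That is precisely why the present corollary can dispense with the discreteness hypothesis that would otherwise be needed, as in the analogous corollary to Theorem~\ref{thm:BLDLimit}.
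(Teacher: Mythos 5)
Your proposal is correct and is exactly the paper's (implicit) argument: the paper states the corollary as an immediate specialization of Theorem~\ref{thm:BLDLimit2} to constant sequences of spaces, noting after Definition~\ref{def:GH} that for fixed spaces (GH-ii) reduces to pointwise convergence. Your extra Arzel\`a--Ascoli step, using the uniform $L$-Lipschitz bound to upgrade pointwise to locally uniform (continuous) convergence, is a harmless and in fact careful way to verify (GH-ii) literally.
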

Note that in \cite{MartioVaisala} Martio and V\"ais\"al\"a
prove a corresponding sharp
result in the Euclidean setting, and in 
\cite{HeinonenKeith}
Heinonen and Keith show that in the setting
of the so called generalized manifolds of type A
the limit map is $K$-BLD with $K$ depending only on the data.

\begin{remark}
  The radiality conditions are also connected to quasiregular
  mappings. For example in \cite[Definition 4.1]{OnninenRajala} 
  quasiregular mappings are defined as orientation preserving branched
  covers for which the function 
  $H_f(x) \colonequals \limsup_{r \to 0} \frac{L(x,f,r)}{l(x,f,r)}$
  is everywhere finite
  and has bounded essential supremum;
  see also \cite{GuoWilliams}.
\end{remark}

\bigskip
\noindent
\textbf{Acknowledgements.} The author would like to thank,
once again, his advisor Pekka Pankka for introducing him to the world
of BLD geometry. The ideas presented in this manuscript
have been incubating for a long time and have been worked on during
several wonderful mathematical events, but the author would
like to mention the 
\emph{School in Geometric Analysis, Geometric analysis on 
Riemannian and singular metric spaces} at Lake Como School of Advanced Studies
in the fall of 2013 and the 
\emph{Research Term on Analysis and Geometry in Metric Spaces} at 
ICMAT in the spring of 2015 as especially fruitful events 
concerning the current work. The author is especially grateful for all the
interesting discussions with fellow mathematicians at these, and other,
events.

The contents of this paper were improved further 
by the author's 
discussions with Piotr Haj{\l}asz and his students while
visiting University of Pittsburgh in 2016. 
Finally, the thoroughness and comments of the anonymous referee are
gratefully acknowledged and have improved
the readability of the manuscript.

\section{Preliminary notions}
\label{sec:Preli}

A mapping between topological spaces
is said to be \emph{open} if the image
of every open set is open and \emph{discrete}
if the point inverses are discrete sets.
A continuous, discrete and open mapping
is called a \emph{branched cover}.

The \emph{length} $\ell(\beta)$ of a path 
$\beta \colon [0,1] \to X$ in a metric space is defined as
\begin{align*}
  \ell(\beta)
  \colonequals 
  \left\{
    \sum_{j=1}^{k} d(\beta(t_{j-1}),\beta(t_j)) \mid 
    0 = t_0 \leq \ldots \leq t_k = 1 
  \right\}.
\end{align*}
Paths with finite and infinite length are called
rectifiable and unrectifiable, respectively.
A metric space $(X,d)$ is 
a \emph{path-metric space} if
\begin{align*}
  d(x,y)
  = \inf 
  \left\{ 
    \ell(\gamma) \mid \gamma \colon [0,1] \to X, \gamma(0) = x , 
    \gamma(1) = y 
  \right\}
\end{align*}
for all $x,y \in X$.
In a similar vein, a metric space $(X,d)$ is ($C$-)quasiconvex
if for all $x,y \in X$ there exist a path $\beta \colon [0,1] \to X$
with $\beta(0)=x$, $\beta(1)=y$ and $\ell(\beta) \leq C d(x,y)$.
A $1$-quasiconvex space is called a \emph{geodesic space}.
Note that by the Hopf-Rinow theorem 
(see e.g.\ \cite[Hopf-Rinow theorem, p.9]{Gromov})
complete and locally compact path-metric spaces are geodesic and proper; i.e.\ 
closed balls are compact. Geodesic spaces are
always locally and globally connected.
Throughout this section $X$ and $Y$ are
locally compact and complete path-metric spaces and
$f \colon X \to Y$ a branched cover.
Furthermore we denote
\begin{align*}
  N(y_0,f,A) 
  \colonequals \# \left( A \cap f \inv \{ y_0 \} \right)
  \quad
  \text{ and }
  \quad
  N(f,A) 
  \colonequals \sup_{y \in A} N(y,f,A),
\end{align*}
where $A \subset X$ and $y_0 \in Y$.

We follow the conventions of \cite{Rickman-book} and
say that $U \subset X$ is a \emph{normal domain} (for
$f$) if $U$ is a precompact domain such that 
$\partial f (U) = f (\partial U)$. A normal domain $U$
is \emph{a normal neighbourhood} of $x \in U$ if
$\overline{U} \cap f \inv (\{ f(x) \}) = \{ x \}$.
By $U(x,f,r)$ we denote the component
of the open set $f \inv (B_Y(f(x),r))$ containing $x$.
The existence of arbitrarily small normal neighbourhoods 
is essential for the theory of branched covers. Heuristically
normal domains for branched covers have the same role as
completeness has for BLD-mappings. The following lemma
guarantees the existence of normal domains and the proof is 
the same as in \cite[Lemma I.4.9, p.19]{Rickman-book},
see also \cite[Lemma 5.1.]{Vaisala}.
\begin{lemma}\label{lemma:TopologicalNormalDomainLemma}
  Let $X$ and $Y$ be locally compact complete path-metric
  spaces and $f \colon X \to Y$ a branched cover.
  Then for every point $x \in X$ 
  there exists a radius $r_x > 0$ such that
  $U(x,f,r)$
  is a normal neighbourhood of $x$ for
  any $r \in (0,r_x)$.
\end{lemma}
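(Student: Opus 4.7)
\bigskip

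\textbf{Proof plan.} The plan is the standard Rickman-style argument, adapted from Euclidean $\R^n$ to the present setting by using properness (Hopf--Rinow) and the fact that open balls in a geodesic space are connected.

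First I would produce a precompact open neighbourhood $W$ of $x$ that isolates $x$ in its fibre. Since $f$ is discrete, $f\inv(\{f(x)\})$ is a discrete set, so some open set around $x$ meets $f\inv(\{f(x)\})$ only in $\{x\}$. Using local compactness (equivalently, properness via Hopf--Rinow), I can shrink this to a precompact open $W \ni x$ with $\overline W \cap f\inv(\{f(x)\}) = \{x\}$. Then $f(\partial W)$ is compact and does not contain $f(x)$, so
\begin{align*}
  r_x \colonequals d\bigl(f(x), f(\partial W)\bigr) > 0.
\end{align*}

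Next I would fix $r \in (0,r_x)$, set $U \colonequals U(x,f,r)$, and show $U \subset W$. Since $U$ is connected, contains $x \in W$, and $f(U) \subset B_Y(f(x),r)$, if $U$ were not contained in $W$ then $U$ would meet $\partial W$; but then some $y \in U \cap \partial W$ would satisfy $d(f(x),f(y)) < r < r_x \leq d(f(x),f(\partial W))$, a contradiction. Hence $\overline U \subset \overline W$ is compact, so $U$ is a precompact domain, and $\overline U \cap f\inv(\{f(x)\}) = \{x\}$.

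It remains to verify $f(\partial U) = \partial f(U)$. The key intermediate fact is $f(U) = B_Y(f(x),r)$. Since $f$ is open, $f(U)$ is open in $B_Y(f(x),r)$; for closedness in the ball, pick $f(z_n) \to y \in B_Y(f(x),r)$ with $z_n \in U$, extract a limit $z \in \overline U$ by compactness, note $f(z) = y \in B_Y(f(x),r)$, and observe that $z \in \partial U$ would place $z$ in another component of $f\inv(B_Y(f(x),r))$, contradicting $z \in \overline U$; so $z \in U$. Because open balls in a geodesic space are connected and $f(x) \in f(U)$, the clopen subset $f(U)$ equals the whole ball. Now $\partial f(U) = \partial B_Y(f(x),r)$, and $f(\partial U) \subset \overline{f(U)} \setminus B_Y(f(x),r) \subset \partial B_Y(f(x),r)$ by the same component argument, so $f(\partial U) \subset \partial f(U)$. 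Conversely, every $y \in \partial B_Y(f(x),r)$ is a limit of some $f(z_n)$ with $z_n \in U$, and a compactness-limit argument puts the limit point in $\overline U \setminus U = \partial U$ with image $y$, giving $\partial f(U) \subset f(\partial U)$.

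The only step that needs real care in the metric setting (as opposed to routine topology) is the connectedness of $B_Y(f(x),r)$, which I would justify by noting that the assumptions (path-metric, locally compact, complete) yield a geodesic space via Hopf--Rinow, and any $y \in B_Y(f(x),r)$ is joined to $f(x)$ by a geodesic of length $d(f(x),y) < r$ that stays inside the ball. Everything else is a standard chase of components, boundaries, and continuous images of compacta.
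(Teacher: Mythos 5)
Your proof is correct and follows essentially the same route the paper takes, namely the standard argument of \cite[Lemma I.4.9]{Rickman-book} (which the paper cites rather than reproduces): isolate $x$ in its fibre by a precompact $W$, set $r_x = d(f(x), f(\partial W))$, trap $U(x,f,r)$ inside $W$ by connectedness, and then run the clopen argument to get $f(U) = B_Y(f(x),r)$ and $f(\partial U) = \partial f(U)$. The two points that genuinely need the metric hypotheses --- properness and connectedness of balls via Hopf--Rinow, and openness of components of open sets via local connectedness of geodesic spaces --- are exactly the ones you flag and handle correctly.
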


The following corollary is an immediate consequence
of
Lemma \ref{lemma:TopologicalNormalDomainLemma}
and the precompactness of normal domains.
\begin{corollary}\label{coro:VaisalaTrick}
  Let $f \colon X \to Y$ be a branched cover between
  locally compact complete path-metric spaces
  and $U \subset X$ a normal domain.
  Then for any $y \in f(U)$ there exists a radius $r_y > 0$
  such that for every $r \in (0, r_y)$ the domains
  $U(z,f,r)$ are disjoint normal 
  neighbourhoods of the points $z \in U \cap f \inv ( \{ y \})$
  with
  \begin{align*}
    U \cap f \inv \left( B(y,r) \right)
    = \bigcup_{z \in U \cap f \inv ( \{ y \})} U(z,f,r).
  \end{align*}
\end{corollary}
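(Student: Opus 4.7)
The plan is to first identify the fiber $f\inv(\{y\}) \cap U$ as a finite set, then select $r_y$ so that the candidate normal neighborhoods behave well, and finally verify the displayed equality through an open--closed argument.

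I would begin by observing that because $f$ is open, $f(U)$ is open in $Y$, so $y \in f(U)$ gives $y \notin \partial f(U) = f(\partial U)$. Hence the fiber $f\inv(\{y\}) \cap U$ equals $f\inv(\{y\}) \cap \overline{U}$, which is a closed, discrete subset of the compact set $\overline{U}$ (using precompactness of $U$ and discreteness of $f$), and so it is finite; enumerate it as $\{z_1,\ldots,z_k\}$. Lemma \ref{lemma:TopologicalNormalDomainLemma} yields radii $\rho_i > 0$ for which $U(z_i,f,r)$ is a normal neighborhood for every $r \in (0,\rho_i)$. Since $\overline{U(z_i,f,\rho_i)}$ is compact and meets $f\inv(\{y\})$ only at $z_i$, a standard compactness argument shows that $U(z_i,f,r)$ shrinks to $\{z_i\}$ as $r \to 0$. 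Using this together with the fact that $z_i \in U$ (open) and the $z_i$ are distinct, I choose $r_y \in (0,\min_i \rho_i)$ small enough that for every $r \in (0,r_y)$ the sets $U(z_1,f,r),\ldots,U(z_k,f,r)$ are pairwise disjoint normal neighborhoods contained in $U$; shrinking $r_y$ further, I also arrange $\overline{B(y,r_y)} \subset f(U)$.

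For such $r$, the inclusion $\bigcup_i U(z_i,f,r) \subset U \cap f\inv(B(y,r))$ is immediate from the choice of $r_y$. For the reverse, fix $x \in U \cap f\inv(B(y,r))$ and let $V$ be the connected component of the (locally connected) open set $U \cap f\inv(B(y,r))$ containing $x$. I claim $y \in f(V)$: the image $f(V)$ is open in $B(y,r)$ by openness of $f$, and closed in $B(y,r)$ because if $y_n = f(x_n) \in f(V)$ with $y_n \to y_\infty \in B(y,r)$, then compactness of $\overline{V} \subset \overline{U}$ yields a subsequence with $x_n \to x_\infty \in \overline{V}$ and $f(x_\infty) = y_\infty$; if $x_\infty \in \partial V$, then $\partial V \subset \partial U \cup f\inv(\partial B(y,r))$ (since $V$ is a component of an open set) forces either $y_\infty \in \partial B(y,r)$ or $y_\infty \in f(\partial U) = \partial f(U)$, both contradicting $y_\infty \in B(y,r) \subset f(U)$. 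Connectedness of $B(y,r)$ then gives $f(V) = B(y,r)$, so some $z_i$ lies in $V$. Because $U(z_i,f,r) \subset U \cap f\inv(B(y,r))$ is connected and contains $z_i \in V$, it lies in the component $V$; conversely $V$ is a connected subset of $f\inv(B(y,r))$ containing $z_i$, so $V \subset U(z_i,f,r)$, giving $V = U(z_i,f,r)$ and hence $x \in U(z_i,f,r)$.

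The crux of the argument, and the main obstacle, is the closedness of $f(V)$ in $B(y,r)$: this is precisely where the defining identity $\partial f(U) = f(\partial U)$ of a normal domain is essential, because it rules out limit points of $V$ lying on $\partial U$ over interior points of $B(y,r)$. The rest is bookkeeping with Lemma \ref{lemma:TopologicalNormalDomainLemma} and the shrinkage of normal neighborhoods.
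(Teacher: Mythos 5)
Your proof is correct. The paper offers no argument here at all --- it simply declares the corollary ``an immediate consequence'' of Lemma \ref{lemma:TopologicalNormalDomainLemma} and the precompactness of normal domains --- so your write-up supplies exactly the details being elided: finiteness of $U \cap f\inv(\{y\})$ via $y \notin f(\partial U)$, the shrinkage of normal neighbourhoods to choose $r_y$, and, most importantly, the clopen argument in the connected ball $B(y,r)$ showing that every component of $U \cap f\inv(B(y,r))$ contains a point of the fiber. That last step, where $\partial f(U) = f(\partial U)$ is used to rule out escape through $\partial U$, is precisely the non-trivial content, and you have identified and handled it correctly; the identification of each such component with the corresponding $U(z_i,f,r)$ then follows from connectedness exactly as you say.
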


As noted in the introduction, the definition
of Lipschitz quotient mappings is equivalent to 
a local definition in the setting of
complete and locally compact path-metric spaces.
\begin{lemma}\label{lemma:LQlocal}
  Let $X$ and $Y$ be complete and locally compact
  path-metric spaces.
  Suppose $f \colon X \to Y$ is
  \emph{locally $L$-LQ}, i.e.\
  for all $x \in X$ there exist $r_0 > 0$ such that
  (LQ) holds for all $0 < r < r_0$.
  Then $f$ is $L$-LQ.
\end{lemma}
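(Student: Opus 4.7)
My plan is to establish the two inclusions of (LQ) separately, exploiting the geodesic and proper structure granted by the Hopf-Rinow theorem. For the outer inclusion $f(B_X(x,r)) \subset B_Y(f(x),Lr)$, I observe that the local hypothesis forces $f$ to be locally $L$-Lipschitz: for $y$ with $d(x,y)<r_0(x)$, applying the local inclusion with radius tending down to $d(x,y)$ yields $d(f(x),f(y)) \leq L d(x,y)$. Since $X$ is a path-metric space, for any $x,y \in X$ and $\eps>0$ there is a path $\gamma$ from $x$ to $y$ of length less than $d(x,y)+\eps$; its compact image can be partitioned into finitely many pieces each contained in a ball on which $f$ is $L$-Lipschitz. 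Summing the local estimates along the partition and sending $\eps\to 0$ gives $d(f(x),f(y))\leq L d(x,y)$ globally, whence the outer inclusion for every $r>0$.

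The inner inclusion $B_Y(f(x),L^{-1}r) \subset f(B_X(x,r))$ is the heart of the lemma. Given $z \in Y$ with $s\colonequals d(f(x),z) < L^{-1}r$, let $\gamma\colon [0,s]\to Y$ be an arclength geodesic from $f(x)$ to $z$. I lift $\gamma$ by a supremum argument: set
\begin{align*}
  T \colonequals \sup\bigl\{t\in[0,s] \mid \exists\, x_t \in X \text{ with } f(x_t)=\gamma(t) \text{ and } d(x,x_t)\leq Lt\bigr\}.
\end{align*}
The local (LQ) at $x$ applied at any radius $Lu+\eta < r_0(x)$ gives preimages of $\gamma(u)$ at open-ball distance less than $Lu+\eta$ from $x$; sending $\eta \searrow 0$ and passing to a convergent subsequence via properness of $X$ produces a lift with $d(x,x_u)\leq Lu$, so $T > 0$. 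Similarly, taking $t_n \nearrow T$ with lifts $x_{t_n}$ in the compact closed ball $\overline{B_X(x,LT)}$, a subsequential limit $x_T$ is itself a lift with $d(x,x_T)\leq LT$ and $f(x_T)=\gamma(T)$, so the supremum is attained.

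It remains to rule out $T<s$. If $T<s$, applying the local (LQ) at $x_T$ with the same $\eta \searrow 0$ extraction produces, for all sufficiently small $u>0$, a lift $x_{T+u}$ with $d(x_T,x_{T+u})\leq Lu$, and the triangle inequality gives $d(x,x_{T+u}) \leq L(T+u)$, contradicting the definition of $T$. Hence $T=s$ and the lift $x_s$ satisfies $f(x_s)=z$ and $d(x,x_s)\leq Ls<r$, i.e., $x_s \in B_X(x,r)$. The main obstacle is this propagation step: the local radii $r_0(\cdot)$ are not controlled uniformly along the lifted path, so one must repeatedly harvest closed-ball lifts from the open-ball hypothesis via properness in order to maintain the sharp estimate $d(x,x_t)\leq Lt$ across the supremum.
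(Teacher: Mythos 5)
Your proof is correct, but it is organized quite differently from the paper's. The paper fixes $x_0$ and runs a connectedness argument on the set $I$ of radii $s$ for which (LQ) holds at $x_0$ for all $r\leq s$: $I$ is a nonempty closed subinterval of $(0,\infty)$, and applying the local (LQ) condition at the (compact, nonempty) sphere $\partial B_X(x_0,\sup I)$ pushes the estimate past $\sup I$, forcing $I=(0,\infty)$; both inclusions are handled at once and the extension step is left as a ``straightforward calculation.'' You instead separate the two inclusions, globalize the Lipschitz bound along near-geodesics in $X$, and prove the co-Lipschitz inclusion by lifting a geodesic of $Y$ with a supremum argument that maintains $d(x,x_t)\leq Lt$, harvesting closed-ball preimages from the open-ball hypothesis via properness. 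Your route is longer but makes explicit exactly where Hopf--Rinow (properness of $X$, geodesics in $Y$) and the non-uniformity of the local radii enter, and it essentially re-derives the geodesic-lifting property of LQ maps, which resonates with the path-lifting machinery used elsewhere in the paper; the paper's route is shorter and avoids lifting altogether. Two small points to tighten: in the Lipschitz globalization, the local hypothesis gives only a pointed estimate $d(f(p),f(q))\leq Ld(p,q)$ for $q$ near $p$, not that $f$ is $L$-Lipschitz \emph{on} a ball between arbitrary pairs of its points, so the partition of the path should be chosen so that consecutive points lie within each other's local radii (e.g.\ by the same one-dimensional connectedness argument you use later), rather than so that each piece sits inside some covering ball; and the limit arguments extracting $x_u$ and $x_T$ use continuity of $f$, which you should note follows from the outer local inclusion before it is invoked.
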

\begin{proof}
  Fix a point $x_0 \in X$ and denote by 
  $I$ the set of those $s \in (0,\infty)$
  for which (LQ) holds at $x_0$ for all $r \leq s$.
  By our assumption $I$ is a non-empty interval.
  Suppose $I \neq (0,\infty)$. Then
  the supremum $\sup I$ exists and
  a straightforward calculation shows that 
  $\sup I \in I$. 
  Since $X$ is a proper geodesic space, the set 
  $\partial B_X(x_0, \sup I)$ is a non-empty
  compact set. By applying the local (LQ)
  condition at all points of the boundary
  $\partial B_X(x_0, \sup I)$ 
  we see by the compactness of the boundary
  that  there exists $\eps > 0$ with $\sup I + \eps \in I$,
  which is a contradiction.
  Thus $I = (0,\infty)$ and the claim holds true.
\end{proof}

As mentioned in the introduction, radial mappings have
an equivalent definition which we give next.
This is in fact the formulation used by Martio and V\"ais\"al\"a, see
\cite[Corollary 2.13]{MartioVaisala}. 
\begin{lemma}\label{lemma:RadialityDef2}
  A mapping $f \colon X \to Y$ between metric
  spaces is $L$-radial 
  if and only if for any point $x \in X$
  there exists a radius $r_0 > 0$ such that
  \begin{align*}\tag{R$^\#$}
    L \inv d(x,y)
    \leq d(f(x),f(y))
    \leq L d(x,y)
  \end{align*}
  for all $y \in B_X(x,r_0)$.
\end{lemma}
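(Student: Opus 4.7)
The lemma is really an equivalence between two ways of expressing a local bi-Lipschitz condition, so the plan is to prove each implication by a short direct argument using the fact that $\partial B(x,r)$ coincides (at least in our setting) with the metric sphere $\{y : d(x,y) = r\}$.

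For the direction $(\mathrm{R}^{\#}) \Rightarrow (\mathrm{R})$, suppose $(\mathrm{R}^{\#})$ holds at $x$ with radius $r_0$. The key observation, valid in any metric space, is that every point $y$ in the topological boundary $\partial B_X(x,r)$ satisfies $d(x,y)=r$ exactly: one has $d(x,y)\geq r$ since $y \notin B_X(x,r)$, and $d(x,y)\leq r$ since $y$ is a limit of points in $B_X(x,r)$ by continuity of the distance. Hence for any $r<r_0$ and $y \in \partial B_X(x,r)$, we have $d(x,y)=r<r_0$, so $y \in B_X(x,r_0)$ and $(\mathrm{R}^{\#})$ applies to give $L^{-1}r \leq d(f(x),f(y)) \leq Lr$. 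Taking sup and inf over the boundary yields $(\mathrm{R})$.

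For the converse $(\mathrm{R}) \Rightarrow (\mathrm{R}^{\#})$, suppose $(\mathrm{R})$ holds with radius $r_0$. Let $y \in B_X(x,r_0)$ be arbitrary, $y\neq x$, and set $r=d(x,y)<r_0$. The step I expect to require the most care is showing that $y$ lies in the topological boundary $\partial B_X(x,r)$ (and not merely on the metric sphere). In a general metric space these can differ, but here $X$ is a complete locally compact path-metric space and hence geodesic by Hopf--Rinow; any geodesic from $x$ to $y$ yields points arbitrarily close to $y$ at distance strictly less than $r$ from $x$, so $y \in \overline{B_X(x,r)}\setminus B_X(x,r)=\partial B_X(x,r)$. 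Given this, $d(f(x),f(y))\leq L(x,f,r)\leq Lr = L d(x,y)$ and $d(f(x),f(y))\geq l(x,f,r)\geq L^{-1}r=L^{-1}d(x,y)$ by $(\mathrm{R})$, which together with the trivial $y=x$ case establishes $(\mathrm{R}^{\#})$ with the same radius $r_0$.

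The only genuine content is the identification of $\partial B_X(x,r)$ with the metric sphere, which is where the path-metric hypothesis of the ambient setting enters; everything else is an immediate bookkeeping between the definitions of $L(x,f,r)$, $l(x,f,r)$ and the pointwise bi-Lipschitz inequalities.
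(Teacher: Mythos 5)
Your proof is correct, and in substance it is the fleshed-out version of the paper's one-line argument, which merely observes that the radius $r_0$ can be taken to be the same on both sides of the equivalence. What you make explicit, and what the paper leaves implicit, is the identification of the topological boundary $\partial B_X(x,r)$ with the metric sphere $\{y : d(x,y)=r\}$, and you correctly isolate this as the only point of substance. The inclusion $\partial B_X(x,r)\subseteq\{y : d(x,y)=r\}$ holds in every metric space, so your implication $(\mathrm{R}^{\#})\Rightarrow(\mathrm{R})$ is completely general; the reverse inclusion, and hence $(\mathrm{R})\Rightarrow(\mathrm{R}^{\#})$, genuinely needs the section's standing assumption that $X$ is a path-metric space. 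Indeed, read literally for arbitrary metric spaces the lemma fails: in $X=\{0\}\cup\{1/n : n\in\N\}\subset\R$ every ball is clopen, so $\partial B(x,r)=\emptyset$ for all $x,r$ and \emph{any} map is vacuously $L$-radial, while $(\mathrm{R}^{\#})$ can easily be violated at $0$. Two minor points: you do not need the full strength of Hopf--Rinow and actual geodesics --- a path $\gamma$ from $x$ to $y$ with $\ell(\gamma)<d(x,y)+\eps$ already passes through points of $B_X(x,d(x,y))$ arbitrarily close to $y$, so the argument works in any path-metric space; and in the direction $(\mathrm{R}^{\#})\Rightarrow(\mathrm{R})$ one should note that the bounds on $L(x,f,r)$ and $l(x,f,r)$ hold vacuously when $\partial B_X(x,r)=\emptyset$, which your sup/inf step silently accommodates.
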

\begin{proof}
  The claim follows immediately as
  we note that the
  radius $r_0$ is the
  same as the radius in the definition
  of radial mappings.
\end{proof}

\subsection{Path-lifting methods}
The main tool in the proof of Theorem \ref{thm:BLDDefinitions}
is \emph{path-lifting}. Given a mapping
$f \colon X \to Y$ and a path $\beta \colon [0,1] \to Y$
we say that a path $\tilde \beta \colon I \to X$, where $I$
is an interval containing $0$, is a \emph{lift} of 
$\beta$ if $f \circ \tilde \beta = \beta|_I$.
A lift is called a \emph{maximal lift} if it is not
a proper restriction of another lift. Finally a lift
is a \emph{total lift} if $I = [0,1]$.
The existence of lifts give rise to maximal lifts
via a straightforward
Zorn's lemma argument, see e.g.\ 
\cite[Theorem 3.2.\ p.22]{Rickman-book}.

The following path-lifting theorem of Floyd \cite{Floyd}
would be sufficient for the purposes of this paper.
Recall that a mapping is \emph{light} if the pre-image
of a point is totally disconnected; discrete mappings are
always light.
\begin{theorem*}[Floyd]\label{thm:Floyd}
  Suppose $f \colon X \to Y$ is a light mapping
  between two compact metric spaces. Then
  $f$ is open if and only if there exists for
  any path $\beta \colon [0,1] \to Y$ and any
  point $x \in f \inv (\beta(0))$ a total lift of 
  $\beta$ starting from $x$.
\end{theorem*}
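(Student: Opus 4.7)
The plan focuses on the forward direction $(\Rightarrow)$, which is the substantive content; the converse is a relatively direct consequence of path-lifting once the failure of openness at some $x_0$ is traced to paths in $Y$ whose lifts are forced to leave a prescribed open neighborhood of $x_0$. My strategy for the forward direction is a Zorn construction combined with two extension arguments, the first topological, the second requiring the trapping idea that forms the main obstacle.

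First, order the partial lifts $\gamma\colon[0,t]\to X$ of $\beta$ with $\gamma(0)=x$ by restriction: the union over a chain is itself a partial lift (consistency forces continuity), so Zorn yields a maximal lift $\tilde\beta\colon I\to X$ with $0\in I\subseteq[0,1]$. Next, close the interval: if $I=[0,a)$ with $a\in(0,1]$, set
\[
C \;\colonequals\; \bigcap_{0\leq s<a}\overline{\tilde\beta([s,a))}.
\]
Each piece is compact and connected (a continuous image of a connected set) and the family is nested, so $C$ is nonempty, compact and connected. Continuity forces $C\subset f\inv(\beta(a))$, and since $f$ is light this fiber is totally disconnected; hence $C=\{x^\ast\}$ is a singleton, and defining $\tilde\beta(a)\colonequals x^\ast$ yields a continuous extension, contradicting maximality. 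Thus $I=[0,a]$.

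The main obstacle is to push past $a$ when $a<1$; set $x_a\colonequals\tilde\beta(a)$. The goal is to produce a lift of $\beta$ for a small additional time, again contradicting maximality. The fiber $f\inv(\beta(a))$ is compact and totally disconnected, hence zero-dimensional, so there is a clopen-in-the-fiber decomposition $f\inv(\beta(a))=K\sqcup K'$ with $x_a\in K$. Separate $K$ and $K'$ by disjoint open $W\supset K$ and $W'\supset K'$; a nested compactness argument gives $\delta>0$ with $f\inv(\overline{B(\beta(a),\delta)})\subset W\cup W'$. Choose $\eta>0$ with $\beta([a,a+\eta])\subset B(\beta(a),\delta)$. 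For any partial lift of $\beta|_{[a,t]}$ starting at $x_a$ with $t\leq a+\eta$, the image is connected and lies in $W\cup W'$; connectedness together with the initial condition $x_a\in W$ force the image into $W$. Re-running the Zorn-plus-closing argument on partial lifts of $\beta|_{[a,a+\eta]}$ inside the compact set $\overline W\cap f\inv(\overline{B(\beta(a),\delta)})$ then produces a lift reaching $a+\eta$, and concatenation with $\tilde\beta$ contradicts maximality. Hence $a=1$ and the total lift exists.

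The crux of the proof is the trapping construction in the final step: the openness of $f$ is used to make the local component structure of $W\cap f\inv(B(\beta(a),\delta))$ non-degenerate, while the clopen decomposition of the fiber, available because $f$ is light, is what prevents any partial lift from escaping $W$ before reaching time $a+\eta$. Once the trap is in place, the Step 2 closing argument is reapplied at the smaller scale to supply continuity at the right endpoint, and the rest of the argument is formal.
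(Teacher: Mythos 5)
Your Steps 1 and 2 are fine: Zorn gives a maximal partial lift, and the cluster-set argument (nested intersection of compact connected sets lands in a fiber, which is totally disconnected since $f$ is light) correctly closes the domain of the maximal lift to a compact interval $[0,a]$. The fatal problem is Step 3. Your trapping construction only shows that \emph{if} a lift of $\beta|_{[a,t]}$ starting at $x_a$ exists, then it cannot escape $W$; it says nothing about whether such a lift exists for any $t>a$, and that existence is the entire content of the theorem. When you propose to ``re-run the Zorn-plus-closing argument'' inside the trap, that machinery can only close a half-open domain $[a,b)$ at its supremum $b$ -- it has no mechanism for advancing a lift past a closed endpoint, which is exactly the problem you started with at $a$. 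The maximal lift inside the trap may perfectly well be the trivial lift $\{a\}\mapsto x_a$. The argument is therefore circular. Symptomatically, openness of $f$ is never actually invoked anywhere in your proof (lightness and compactness do all the work you describe), yet the forward implication is false without it: the inclusion $[0,\tfrac12]\cup\{1\}\hookrightarrow[0,1]$ is light and continuous between compact metric spaces, but the path $t\mapsto 1-t/2$ has no lift starting at $1$. Any correct proof must use openness to \emph{produce} points of the lift at times $t>a$, not merely to constrain them.

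For comparison: the paper does not reprove Floyd's theorem but cites it, and instead proves a special case (Proposition \ref{Proposition:Luisto}) by a genuinely different mechanism. There, for a compact parameter set $J$ one considers the fiber-counting function $t\mapsto \#\bigl(U_0\cap f^{-1}(\beta(t))\bigr)$; openness makes the superlevel sets open, so the sublevel sets give a countable closed cover of $J$, and the Baire category theorem produces a subinterval on which the count is constant. On such an interval, openness plus the constant count force $f$ to be a local homeomorphism over $\beta$, which is what actually manufactures local sections; a second Baire argument then glues these local lifts into a total one. If you want to repair your Step 3, you need an ingredient of this kind (or Floyd's original quasi-component argument): some quantitative consequence of openness that yields existence of lifts on a nondegenerate parameter interval, before any trapping or maximality argument can be brought to bear.
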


We give, for the reader's convenience,
a self-contained proof of a special case
(Proposition \ref{Proposition:Luisto})
of Floyd's theorem. 
The idea in the proof of 
Proposition \ref{Proposition:Luisto} 
is partly motivated by the proof of the
classical application of the Baire category theorem
e.g.\ in \cite[2.5.56, p.76]{KaczorNowak}.
The core of the proof is a Baire category theorem argument,
which we formulate as Lemma \ref{Lemma:BaireLemma} for the
sake of clarity. 

Let $f \colon X \to Y$ be a branched cover
between locally compact and complete path-metric spaces,
$U_0$ a normal domain for $f$ and $\beta \colon [0,1] \to f(U_0)$
a path. For any compact set $J \subset [0,1]$ we say that 
an open interval $W_J \subset [0,1]$ intersecting $J$
is \emph{a lifting interval for $J$}
if there exists a point $t_J \in W_J \cap J$, a positive 
number $k_J \in \N$, and a radius
$r_J > 0$ such that
\begin{enumerate}[{(LI}1{)}]
\item $\# (U_0 \cap f \inv ( \beta(t)) ) = k_J$ for all $t \in W_J \cap J$,
\item 
  for $\{ z_J^1, \ldots , z_J^{k_J} \} \colonequals U_0 \cap f \inv ( \beta(t_J) )$
  we have
  \begin{align*}
    U_0 \cap f \inv \left(B_Y(\beta(t_J),r_J )\right)
    = \bigcup_{i=1}^{k_J} U(z_J^i,f,r_J),
  \end{align*}
  where the union on the right hand side is disjoint, and
\item there exists mappings
  $g_J^i\colon W_J \cap J \to U(z_J^i,f,r_J)$,
  for $i = 1, \ldots, k_J$,
  for which $f \circ g_J^i = \beta|_{W_J \cap J}$ and
  the images of the mappings $g_J^i$ 
  cover all of $U_0 \cap f \inv (\beta(W_J \cap J))$.
\end{enumerate}

\begin{lemma}\label{Lemma:BaireLemma}
  Let $f \colon X \to Y$ be a branched cover
  between locally compact and complete path-metric spaces,
  $U_0$ a normal domain for $f$, and $\beta \colon [0,1] \to f(U_0)$
  a path. For any compact set $J \subset [0,1]$, there exists
  a lifting interval $W_J$ of $J$.
\end{lemma}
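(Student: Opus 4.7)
The plan is to introduce the counting function $N\colon J \to \N$ given by $N(t) \colonequals \#(U_0 \cap f\inv(\beta(t)))$, show it is lower semicontinuous, invoke the Baire category theorem to find a sub-interval on which $N$ is constant, and then read off the lifting data from Corollary~\ref{coro:VaisalaTrick}.

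First I would verify that $N$ is well-defined and finite. Since $U_0$ is a precompact normal domain and $f$ is open, the image $f(U_0)$ is open and disjoint from $f(\partial U_0) = \partial f(U_0)$; consequently $f\inv(\beta(t)) \cap \overline{U_0} \subset U_0$, which is a discrete subset of a compact set, hence finite. To see that $N$ is lower semicontinuous, fix $t \in J$ with $N(t) = k$ and preimages $z^1,\dots,z^k \in U_0$. Corollary~\ref{coro:VaisalaTrick} applied at $y = \beta(t)$ produces a radius $r > 0$ such that the normal neighbourhoods $U(z^i, f, r)$ are pairwise disjoint with union $U_0 \cap f\inv(B(y,r))$. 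A standard argument using $f(\partial U(z^i, f, r)) \subset \partial B(y,r)$ together with the connectedness of $B(y,r)$ shows that each $U(z^i, f, r)$ surjects onto $B(y,r)$; therefore for any $t'$ near $t$ with $\beta(t') \in B(y,r)$ each such neighbourhood contributes at least one preimage of $\beta(t')$, and $N(t') \geq k$.

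Next I would run the Baire argument. Writing $J = \bigcup_{k \in \N} J_k$ with $J_k \colonequals \{t \in J : N(t) \leq k\}$, each $J_k$ is closed in $J$ by the lower semicontinuity just established. Since the compact metric space $J$ is a Baire space, some $J_{k_0}$ has nonempty interior $V$ relative to $J$. On $V$ the function $N$ is integer-valued and bounded by $k_0$, so $k_J \colonequals \max_{t \in V} N(t)$ is attained, and lower semicontinuity yields that $V \cap \{N = k_J\} = V \cap \{N \geq k_J\}$ is open in $J$. Choose $t_J$ in this set and an open interval $W_J \subset [0,1]$ about $t_J$ with $W_J \cap J \subset V \cap \{N = k_J\}$; this delivers (LI1).

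Finally, apply Corollary~\ref{coro:VaisalaTrick} at $y = \beta(t_J)$ to obtain a radius $r_J$ and the disjoint decomposition of (LI2) via the preimages $z_J^1,\dots,z_J^{k_J}$, and shrink $W_J$ using the continuity of $\beta$ to ensure $\beta(W_J) \subset B(y,r_J)$. For every $t \in W_J \cap J$ we then have
\begin{align*}
  U_0 \cap f\inv(\beta(t))
  = \bigsqcup_{i=1}^{k_J} \bigl( U(z_J^i, f, r_J) \cap f\inv(\beta(t)) \bigr),
\end{align*}
a set of cardinality $k_J$ whose summands are each nonempty by the surjectivity noted earlier; hence each summand is a singleton, and defining $g_J^i(t)$ to be its unique element produces the maps required by (LI3). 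The main obstacle is the lower semicontinuity of $N$, which rests on the surjectivity of normal neighbourhoods onto their image balls; once that is in place, the Baire step and the construction of the $g_J^i$ are essentially automatic.
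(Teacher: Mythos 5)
Your proof is correct and follows essentially the same route as the paper: lower semicontinuity of the multiplicity function, a Baire category argument producing a relatively open subset of $J$ of constant multiplicity $k_J$, and Corollary~\ref{coro:VaisalaTrick} to obtain the disjoint normal neighbourhoods. The only cosmetic difference is at the final step, where you get the maps $g_J^i$ by noting that each $U(z_J^i,f,r_J)$ meets $f\inv(\beta(t))$ in exactly one point (via the surjectivity of normal neighbourhoods onto their image balls), whereas the paper invokes local injectivity together with compactness to produce local inverses; both arguments are valid.
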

\begin{proof}
  For any compact subset $J \subset [0,1]$ and $k \in \N$, we denote
  \begin{align*}
    N_J^{\geq k}
    \colonequals & \{ t \in J \mid \# (U_0 \cap f \inv (\{\beta(t)\})) \geq k \}, \\
    N_J^{\leq k}
    \colonequals & \{ t \in J \mid \# (U_0 \cap f \inv (\{\beta(t)\})) \leq k \},
  \end{align*}
  and $N_J^{k} = N_J^{\geq k} \cap N_J^{\leq k}$.

  Since $f$ is an open continuous map, the set $N_J^{\geq k}$ is open.
  Thus the complementary set
  $N_J^{\leq k} = J \setminus N_J^{\geq k+1}$
  is closed for all $k \in \N$. Since $f$ is discrete
  the sets $\{ N_J^{\leq k} \mid k \in \N \}$ 
  form a countable closed cover of the compact set $J$. 
  Thus, by the Baire category theorem
  there exists a minimal index $k_J \in \N$ for which the set
  $N_J^{\leq k_J}$ has interior points in $J$. Since $k_J$ is minimal,
  also the set $N_J^{k_J}$ has interior points in $J$. This
  means that there exists an open interval $V \subset [0,1]$
  with $V \cap J \subset N_J^{k_J}$,
  so for all $t \in V \cap J$
  \begin{align*}
    \# 
    \left(
      U \cap f \inv (\{ \beta(t) \}) 
    \right) 
    = k_J.
  \end{align*}
  Let $t_J \in V \cap J$ and denote
  $\{ z_J^1, \ldots, z_J^{k_J}\} \colonequals U_0 \cap f\inv (\{ \beta(t_J) \})$.
  Let also $r_J>0$ be a radius so small that the sets $U(z_J^i,f,r_J)$
  are disjoint normal neighbourhoods of the points $z_i$ for $i=1,\ldots, k_J$
  satisfying
  \begin{align*}
    U_0 \cap f \inv \left( B(\beta(t_J),r_J) \right)
    = \bigcup_{i=1}^{k_J} U(z_J^i,f,r_J)
  \end{align*}
  as in Corollary \ref{coro:VaisalaTrick}.
  Set $W_J \subset V$ to be an open interval around $t_J$
  with $\beta(W_J) \subset B(\beta(t_J),r_J)$.
  Since $f$ is an open map, the restriction of $f$ to the pre-image
  of $\beta(W_J \cap J)$ in $U_0$ is locally injective by the definition of $k_J$. 
  Thus for any compact set $K \subset W_J \cap J$
  the pre-image
  $f \inv (\beta (K))$ is compact and as a locally injective
  map between compact sets in Hausdorff spaces 
  the restriction of $f$
  to $f \inv ( \beta (K) )$ is a local homeomorphism. 
  This local inverse 
  yields maps
  \begin{align*}
    g_J^i\colon W_J \cap J \to U(z_J^i,f,r)
  \end{align*}
  satisfying $f \circ g_J^i = \beta|_{W \cap J}$,
  for $i = 1, \ldots, k_J$. Furthermore the images of these
  lifts cover all of $U_0 \cap f \inv (\beta(W_J \cap J))$.
\end{proof}

\begin{proposition}\label{Proposition:Luisto}
  Let $f \colon X \to Y$ be a branched cover between 
  locally compact and complete path-metric spaces. 
  Suppose $U_0$ is a normal domain for $f$ and let $\beta \colon [0,1] \to f(U_0)$
  be a path. Then for any $x_0 \in U_0 \cap f \inv (\{\beta(0)\})$ there
  exists a total lift of  $\beta$ starting from $x$, i.e.\
  a path $\tilde \beta \colon [0,1] \to U_0$ for which
  $\tilde\beta(0) = x_0$ and $f \circ \tilde\beta = \beta$.
\end{proposition}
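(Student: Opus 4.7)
The plan is to construct the total lift $\tilde\beta$ via a Zorn's lemma argument combined with Lemma \ref{Lemma:BaireLemma} and the compactness of $\overline{U_0}$. Let $\mathcal{L}$ be the family of continuous lifts $\tilde\beta_I \colon I \to U_0$ with $\tilde\beta_I(0) = x_0$, defined on subintervals $I \subset [0,1]$ containing $0$, partially ordered by extension. The union of any chain of such lifts is itself a lift, so Zorn's lemma provides a maximal element $(\tilde\beta, I)$, and it remains to show $I = [0,1]$.

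Suppose first that $I = [0,a)$ for some $a \leq 1$. I claim $\tilde\beta$ admits a unique continuous extension to $a$ with value in $U_0$, contradicting maximality. Consider the accumulation set
\[
  L \colonequals \bigcap_{\eps > 0} \overline{\tilde\beta([a-\eps, a))}
\]
inside the compact space $\overline{U_0}$. As a decreasing intersection of compact connected sets, $L$ is compact and connected; every $y \in L$ satisfies $f(y) = \beta(a)$, so $L$ lies in the set $f \inv (\{\beta(a)\}) \cap \overline{U_0}$, which is finite because $f$ is discrete and $\overline{U_0}$ is compact. A connected subset of a finite discrete space is a singleton, so $L = \{y^*\}$ and $\lim_{t \to a^-} \tilde\beta(t) = y^*$. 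Since $U_0$ is a normal domain, $\partial f(U_0) = f(\partial U_0)$ is disjoint from $f(U_0) \ni \beta(a)$, which forces $y^* \in U_0$. Setting $\tilde\beta(a) \colonequals y^*$ gives a continuous lift on $[0,a]$, contradicting maximality.

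Suppose next that $I = [0,a]$ with $a < 1$. Choose a normal neighborhood $V$ of $\tilde\beta(a)$ with $\overline{V} \subset U_0$, and use Corollary \ref{coro:VaisalaTrick} to shrink it to $V' \colonequals U(\tilde\beta(a), f, r)$ so that $\tilde\beta(a)$ is the unique preimage of $\beta(a)$ in $\overline{V'}$; then pick $\delta > 0$ with $\beta([a, a+\delta]) \subset B_Y(\beta(a), r)$. Apply Lemma \ref{Lemma:BaireLemma} with the normal domain $V'$, the path $\beta|_{[a, a+\delta]}$ and the compact set $J = [a, a+\delta]$ to obtain a lifting interval $W$ together with lifts $g^i$ defined on $W \cap J$. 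The lifts land in disjoint sheets $U(z^i, f, r_J)$, while $\tilde\beta(a)$ is the unique preimage of $\beta(a)$ in $V'$; when the multiplicity $k_J$ exceeds one this forces $a \notin W$. I then iterate the lemma on the compact set $J \setminus W$, which still contains $a$, and the Baire-type exhaustion produces a sequence of lifting intervals whose infima decrease toward $a$. For each such lift, the accumulation-set argument of the first case, applied inside $\overline{V'}$, shows that it extends continuously to $a$ with value $\tilde\beta(a)$. Gluing $\tilde\beta$ to such an extension yields a strictly larger element of $\mathcal{L}$, contradicting maximality.

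The main obstacle is precisely this last extension step: Lemma \ref{Lemma:BaireLemma} guarantees a lifting interval for any compact set but does not control where in that set the interval lies, so the lemma cannot be applied directly at the endpoint $a$. Overcoming this requires an iterative application of the lemma on shrinking compact sets containing $a$, combined with the uniqueness of the preimage $\tilde\beta(a)$ in the normal neighborhood $V'$ and the compactness of $\overline{V'}$, so that the lifts obtained in the limit extend continuously through $a$ and glue with $\tilde\beta$.
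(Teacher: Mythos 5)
Your Zorn's lemma set-up and your treatment of the case $I=[0,a)$ are correct: the accumulation set $L$ is a nested intersection of compact connected sets, hence a connected subset of the finite set $f\inv(\{\beta(a)\})\cap\overline{U_0}$, so a singleton, and normality of $U_0$ keeps the limit point off $\partial U_0$. This plays the same role as the endpoint extension at the end of the paper's proof.

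The genuine gap is in the case $I=[0,a]$ with $a<1$, which is the heart of the proposition. Lemma \ref{Lemma:BaireLemma} only produces a lifting interval \emph{somewhere} in a prescribed compact set, and, as you note, it may miss $a$. Your remedy --- iterating the lemma on $J\setminus W$ --- does not close the gap. First, removing one lifting interval at a time gives no control over where the next lifting interval sits; there is no reason the infima of successive lifting intervals should decrease to $a$, and after countably many removals the residual compact set containing $a$ can still be a Cantor-type set. Second, and more seriously, even granted lifting intervals accumulating at $a$, each one only supplies lifts of $\beta$ restricted to sets of the form $W_k\cap J_k$, i.e.\ on \emph{fragments} of $[a,a+\delta]$, whereas your accumulation-set argument needs a continuous lift already defined on a full one-sided interval $(a,a+\eta)$. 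Assembling the fragments into one continuous lift on an interval is precisely the hard step: one must choose the lifts on the countably many complementary open intervals so that they all remain in a single sheet $U(z^{i_0},f,r_J)$ (this uses the normality of that sheet together with $\beta(W_J)\subset B(\beta(t_J),r_J)$) and then verify continuity of the glued map at the endpoints of those complementary intervals. The paper does this by introducing the family $\mathcal{I}$ of open intervals over which lifts exist through every preimage, showing $\cup\,\mathcal{I}$ is dense, applying the Baire lemma to the residual compact set $[0,1]\setminus\cup\,\mathcal{I}$, and carrying out the gluing explicitly to reach a contradiction. Without that gluing argument (or an appeal to Floyd's theorem), the forward extension past $a$ is not established, and the maximality contradiction in your second case does not go through.
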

\begin{proof}
  To use Lemma \ref{Lemma:BaireLemma}
  to construct lifts of $\beta$,
  let $\mathcal{I}$ be the collection of 
  all intervals $(a,b) \subset [0,1]$ such that for any $c \in (a,b)$
  and any $x \in U_0 \cap f \inv (\{ \beta(c) \})$ there exists a path
  $\alpha \colon (a,b) \to U_0$ with $\alpha(c) = x$ and 
  $f \circ \alpha = \beta|_{(a,b)}$. 
  The definition immediately yields
  that the collection $\mathcal{I}$ is closed under restrictions to
  open subintervals, finite non-empty intersections and finite unions when
  the union is an interval. Furthermore if 
  $(a_j,b_j) \in \mathcal{I}$, $j \in S$, is any collection
  with a connected union, a straightforward argument shows
  that also $(\inf_j a_j, \sup_j b_j) \in \mathcal{I}$.
  We conclude that $\mathcal{I}$ is closed under
  arbitrary connected unions. The rest of
  the proof is dedicated into showing first that
  $\mathcal{I}$ is non-empty, second that
  $\cup \mathcal{I}$ is dense in $[0,1]$ and finally
  that $(0,1) \in \mathcal{I}$.
  
  Applying Lemma \ref{Lemma:BaireLemma}
  with $J$ a closed subinterval of $[0,1]$
  yields an interval $W_J \subset [a,b]$ and
  $k_J$ lifts $g_J^1, \ldots, g_J^{k_J}$ of
  $\beta|_{W_J}$ covering all of the pre-image 
  of $\beta(W_J)$ in $U_0$.
  From this we conclude that
  $\mathcal{I}$ contains $ W_J$ and thus
  is not empty.
  In fact, this argument shows that
  every closed interval $J \subset [0,1]$ contains
  an open subinterval $I \subset W_J \cap \operatorname{int} J$
  with $I \in \mathcal{I}$.
  Thus $\cup \mathcal{I}$ is dense in $[0,1]$. 

  We show next that $(0,1) \in \mathcal{I}$.
  The collection $\mathcal{I}$ is closed under connected
  unions, so it suffices to show that $\cup \mathcal{I} = (0,1)$.
  Suppose $\cup \mathcal{I} \neq (0,1)$. 
  Since the collection $\mathcal{I}$ is closed under connected
  unions, we may write $\cup \mathcal{I}$ as 
  a countable union of disjoint open intervals $(a_j,b_j)$ for $j \in \N$.
  We apply Lemma \ref{Lemma:BaireLemma}
  to the compact set
  \begin{align*}
    J 
    \colonequals [0,1] \setminus \bigcup_j (a_j,b_j)
  \end{align*}
  and obtain a lifting interval $W_J$ for $J$ together with the related
  points $z_J^1, \ldots , z_J^{k_J}$ as in (LI2).
  We claim that $W_J \in \mathcal{I}$. 
  Let $c \in W_J$ and fix a pre-image $z_0 \in U_0 \cap f \inv ( \{ \beta(c) \} ) $.
  Let $i_0 \in \{ 1, \ldots, k_J \}$ 
  be the unique index for which $z_0 \in U(z_J^{i_0},f,r_J)$.
  To define a lift $\gamma \colon W_J \to U(z_J^{i_0},f,r_J) \subset U_0$
  of $\beta|_{W_J}$ we set first $\gamma|_{W_J \cap J} = g_J^{i_0}$.
  Since $\cup \mathcal{I} = \bigcup_{j \in \N} (a_j,b_j)$ is 
  dense in $[0,1]$, there exists intervals $(a_j,b_j)$ intersecting
  $W_J$.
  For each of the intervals $(a_j,b_j) \subset W_J$ there
  exists at least one lift $\alpha_j$ of $\beta|_{(a_j,b_j)}$
  with $|\alpha_j| \cap U(z_J^{i_0},f,r_J) \neq \emptyset$;
  if $c \in (a_j,b_j)$ we take the lift of $\beta|_{(a_j,b_j)}$
  containing $z_0$, otherwise we take any one
  of the finitely many possibilities.
  Since
  \begin{align*}
    \beta \left( ( a_j, b_j) \right)
    \subset \beta(W_J) 
    \subset B(\beta(t_J),r_J)
  \end{align*}
  and $U(z_J^{i_0},f,r_J)$
  is normal domain, $|\alpha_j| \subset U(z_J^{i_0},f,r_J)$.
  Thus we may set $\gamma|_{(a_j,b_j)} = \alpha_j$.
  For the two possible intervals intersecting $W_J$
  but not contained in $W_J$ we fix lifts in $U_0$ in a similar vein
  by studying the intersection of such intervervals with $W_J$.
  From the definition of 
  a lifting interval
  we immediately see that the lift
  $\gamma$ thus defined is continuous.
  Thus $W_J \in \mathcal{I}$,
  which is a contradiction with the definition of $J$
  and we conclude that $\cup \mathcal{I} = (0,1)$.
  Since $\mathcal{I}$ is closed under connected unions,
  $(0,1) \in \mathcal{I}$.

  The fact that $(0,1) \in \mathcal{I}$
  implies the existence of lifts of $\beta|_{(0,1)}$.
  To conclude the proof we need to
  show that there exists a lift of 
  the whole path $\beta$
  with $\tilde \beta(0) = x_0$.
  Let $r > 0$ be so small that
  $U(x_0,f,r)$ is a normal neighbourhood of 
  $x_0$. Take $c \in (0,1)$ such that
  $\beta|_{(0,c]} \subset B(\beta(0),r)$
  and fix $x \in U(x_0,f,r) \cap f \inv (\{ \beta(c) \})$.
  Since $(0,1) \in \mathcal{I}$, there exists
  a lift $\tilde \beta \colon (0,1) \to U_0$ of $\beta|_{(0,1)}$
  with $\tilde \beta(c) = x$. The restriction
  $\tilde \beta|_{(0,c)} \colon (0,c) \to U(x_0,f,r)$
  is a lift of $\beta \colon (0,c) \to B(\beta(0),r)$,
  and since $U(x_0,f,r)$ is a normal neighbourhood
  of $x_0$, $\lim_{s \to 0} \tilde \beta(s) = x_0$. Thus the lift
  $\tilde \beta \colon (0,1) \to U_0$ extends to a lift
  $\tilde \beta' \colon [0,1) \to U_0$ with $\tilde \beta(0) = x_0$.
  Since $U_0$ is a normal neighbourhood and $f$ is discrete,
  the limit $\lim_{s \to 1} \tilde \beta(s)$ will equal
  one of the finitely many pre-images 
  $U_0 \cap f \inv (\{\beta(1) \})$. Thus the
  lift extends to the whole interval $[0,1]$ and 
  the claim holds true.
\end{proof}

\begin{remark}
  We note that the proofs of 
  Lemma \ref{Lemma:BaireLemma} and Proposition \ref{Proposition:Luisto} 
  are valid for branched covers between
  locally compact, locally and globally path-connected Hausdorff spaces.

  In the proof of Lemma \ref{Lemma:BaireLemma} the fact that
  $\beta$ is a \emph{path}, i.e.\ that it is defined on an interval,
  has no role. Indeed the result holds true when
  ``lifting interval'' is replaced by ``lifting domain''
  for any mapping 
  $\beta \colon Z \to f(U_0)$
  where $Z$ is a Baire space.
  The proof of the Proposition \ref{Proposition:Luisto} does,
  on the contrary, rely heavily on the fact that $\beta$ is defined 
  on an interval.
\end{remark}

By the theorem of Floyd
and Proposition \ref{Proposition:Luisto} we do not 
obtain the maximal family of lifts as in Rickman's
path lifting theorem \cite{Rickman-PathLifting}.
A single total lift of a given path within 
a normal domain is, however, sufficient for our methods.
For rectifiable paths and BLD-mappings the local
path-lifting extends to a global lift in the following sense.
\begin{lemma}\label{lemma:BLDPathLifting}
   Let $f \colon X \to Y$ be an $L$-BLD mapping
   between two locally compact and complete path-metric spaces
   for $L \geq 1$. Then
   any rectifiable path $\beta \colon [0,1] \to Y$
   has a total lift starting from any point
   $x_0 \in f \inv (\{ \beta(0) \})$.
\end{lemma}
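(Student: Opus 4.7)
The plan is a maximal-lift argument whose two ingredients are Proposition \ref{Proposition:Luisto}, which produces lifts locally inside normal neighbourhoods, and the upper bound $\ell(\tilde\beta) \leq L\,\ell(\beta)$ coming from the (BLD) inequality, which together with completeness of $X$ forces any maximal lift to converge at its right endpoint.

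First, I would start the lift locally. Using Lemma \ref{lemma:TopologicalNormalDomainLemma} pick a normal neighbourhood $U_0$ of $x_0$; by continuity of $\beta$ there exists $\delta > 0$ with $\beta([0,\delta]) \subset f(U_0)$, and Proposition \ref{Proposition:Luisto} provides a lift of $\beta|_{[0,\delta]}$ starting at $x_0$. Hence the set $S := \{ s \in [0,1] \mid \beta|_{[0,s]} \text{ admits a lift starting at } x_0 \}$ is nonempty, and I set $T := \sup S$. A routine Zorn's lemma argument on the collection of all partial lifts starting at $x_0$, ordered by restriction, yields a maximal lift $\tilde\beta$ defined on an interval of the form $[0,T)$ or $[0,T]$.

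Next I would show that $\tilde\beta$ always extends to $[0,T]$ and then past it. For $0 \leq s < t < T$ the (BLD) inequality applied to $\tilde\beta|_{[s,t]}$ gives
\[
d(\tilde\beta(s), \tilde\beta(t)) \leq \ell(\tilde\beta|_{[s,t]}) \leq L\, \ell(\beta|_{[s,t]}).
\]
Since $\beta$ is rectifiable, $\ell(\beta|_{[s,t]}) \to 0$ as $s,t \to T^-$, so $\{\tilde\beta(t)\}_{t \to T^-}$ is Cauchy. Completeness of $X$ produces a limit $x_T$, and continuity of $f$ forces $f(x_T) = \beta(T)$, giving a continuous lift on $[0,T]$. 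If $T < 1$, I apply the local step at $x_T$: pick a normal neighbourhood $V$ of $x_T$ by Lemma \ref{lemma:TopologicalNormalDomainLemma}, choose $\eta > 0$ with $\beta([T, T+\eta]) \subset f(V)$, and use Proposition \ref{Proposition:Luisto} to lift $\beta|_{[T, T+\eta]}$ starting from $x_T$. Concatenating with $\tilde\beta|_{[0,T]}$ produces a lift on $[0, T+\eta]$, contradicting the definition of $T$. Hence $T = 1$ and $\tilde\beta$ is the desired total lift.

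The main obstacle is the Cauchy step, and it is precisely there that both hypotheses are essential: the BLD upper bound on $\ell(\tilde\beta|_{[s,t]})$ is needed to control the lift in $X$ by the path in $Y$, and the rectifiability of $\beta$ is what makes that bound useful at the limit point. Without either, an accumulating maximal lift could fail to converge, and the extension across $T$ would collapse; once the Cauchy property is in hand, the remainder of the argument is bookkeeping with normal neighbourhoods and maximality.
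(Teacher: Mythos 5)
Your argument is correct and is essentially the paper's proof written out in full: the paper's one-line justification (maximal lifts of rectifiable paths lie in compact closed balls by the (BLD) length bound) compresses exactly your maximal-lift, Cauchy-at-the-endpoint, and local-extension steps. The only cosmetic difference is that you invoke completeness via a Cauchy sequence while the paper invokes properness of closed balls (equivalent here by Hopf--Rinow), so no further comment is needed.
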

\begin{proof}
  The claim follows immediately from noting that lifts of rectifiable
  paths are contained in closed balls which are compact in the setting
  of the lemma.
\end{proof}

Proposition \ref{Proposition:Luisto} also yields
the following corollary. 
\begin{corollary}\label{coro:ConnectedPreImages}
  Let $f \colon X \to Y$ be a branched cover between 
  locally compact complete path-metric spaces.
  Let $x \in X$ and let $U$ be a normal neighbourhood of $x$.
  Then for any connected open set $W \subset f(U)$ with $f(x) \in W$
  the pre-image $U \cap f \inv W$ is connected.
\end{corollary}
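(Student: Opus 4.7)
My plan is to show that $U\cap f\inv W$ is path-connected, and for this the key input is the local path-lifting result Proposition \ref{Proposition:Luisto}.

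First, I would observe that $Y$ is a geodesic space (by Hopf-Rinow, as noted in the preliminaries), so in particular it is locally path-connected: any ball $B_Y(y,r)$ is path-connected because a geodesic from $y$ to any $y'\in B_Y(y,r)$ has length $d(y,y')<r$ and therefore stays inside the ball. Consequently the open connected set $W\subset Y$ is path-connected.

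Next, fix an arbitrary point $z\in U\cap f\inv W$ and choose a path $\beta\colon[0,1]\to W$ with $\beta(0)=f(z)$ and $\beta(1)=f(x)$. Since $W\subset f(U)$ and $U$ is a normal domain (being a normal neighbourhood of $x$), Proposition \ref{Proposition:Luisto} applies: there exists a total lift $\tilde\beta\colon[0,1]\to U$ of $\beta$ with $\tilde\beta(0)=z$. By construction $f\circ\tilde\beta=\beta$, so the image of $\tilde\beta$ is contained in $U\cap f\inv(\beta([0,1]))\subset U\cap f\inv W$.

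To finish, I only need to confirm that $\tilde\beta$ actually connects $z$ to $x$ inside $U\cap f\inv W$. But $\tilde\beta(1)\in U$ and $f(\tilde\beta(1))=\beta(1)=f(x)$, so $\tilde\beta(1)\in \overline U\cap f\inv(\{f(x)\})=\{x\}$ by the definition of a normal neighbourhood; hence $\tilde\beta(1)=x$. Thus every $z\in U\cap f\inv W$ can be joined to $x$ by a path in $U\cap f\inv W$, which gives path-connectedness, hence connectedness. I do not foresee a serious obstacle; the only delicate point is that one must apply Proposition \ref{Proposition:Luisto} with the normal domain equal to $U$ itself (not to some smaller normal neighbourhood) in order to keep the endpoint $x$ inside and use the normality condition to identify it uniquely.
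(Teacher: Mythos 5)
Your proof is correct and follows essentially the same route as the paper: connect the image point to $f(x)$ by a path in $W$, lift it into $U$ via Proposition \ref{Proposition:Luisto}, and use normality of $U$ to identify the endpoint with $x$. You supply two small details the paper leaves implicit (path-connectedness of $W$ via the geodesic property, and the identification $\tilde\beta(1)=x$ from $\overline U\cap f\inv(\{f(x)\})=\{x\}$), which is fine.
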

\begin{proof}
  Let $y \in W$ and connect $f(y)$ to $f(x)$ with a path $\alpha \colon [0,1] \to W$.
  For any point $z \in f \inv \{ y \}$ there exists, by 
  Proposition \ref{Proposition:Luisto}, a lift $\tilde \alpha \colon [0,1] \to U$
  with $\alpha(0) = z$. Thus $z$ belongs to the same component of $f \inv W$ as
  $x$, which proves the claim.
\end{proof}

\section{Proof of Theorem \ref{thm:BLDDefinitions} }\label{sec:BLD}
\label{sec:Main}

In the proof of Theorem \ref{thm:BLDDefinitions} 
the most involved part is to show that
the radiality condition (R) implies the path-length condition
(BLD) with the same constant (see also \cite{MalekzadehHajlasz}).
This implication is true in the setting of general path-metric spaces
and does not require $f$ to be open.
Thus we state it as the following separate lemma.
We thank Jussi V\"ais\"al\"a for this short and elementary
proof.
\begin{lemma}\label{lemma:RadialIsBLD}
  Let $f \colon X \to Y$ be an $L$-radial mapping between path-metric spaces
  and $\beta \colon [0,1] \to X$ a path. Then
  \begin{align*}
    L \inv \ell(\beta)
    \leq \ell(f \circ \beta)
    \leq L \ell(\beta).
  \end{align*}
\end{lemma}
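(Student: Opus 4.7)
The plan is to exploit the local nature of (R\#) (Lemma \ref{lemma:RadialityDef2}) via arclength reparametrization, upgrading the pointwise local Lipschitz estimates obtained from (R\#) to global Lipschitz estimates by a standard open-closed connectedness argument. For the upper bound, I may assume $\ell(\beta) < \infty$; reparametrizing by arclength gives a $1$-Lipschitz map $\tilde\beta \colon [0, L_0] \to X$ with $L_0 = \ell(\beta)$, and I set $h = f \circ \tilde\beta$. At each $s$, Lemma \ref{lemma:RadialityDef2} combined with continuity of $\tilde\beta$ furnishes $\eta_s > 0$ such that $d(h(s), h(t)) \leq L\, d(\tilde\beta(s), \tilde\beta(t)) \leq L|t-s|$ whenever $|t-s| < \eta_s$, so $h$ is locally $L$-Lipschitz. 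To globalize, fix $s_0$ and consider $E = \{t \in [s_0, L_0] : d(h(s_0), h(t)) \leq L(t - s_0)\}$: it contains $s_0$, is closed by continuity of $h$, and is open to the right at each $t_0 \in E \cap [s_0, L_0)$, since the local Lipschitz estimate at $t_0$ together with the triangle inequality places the half-neighborhood $[t_0, t_0 + \eta_{t_0})$ into $E$. Hence $E = [s_0, L_0]$, $h$ is globally $L$-Lipschitz on $[0, L_0]$, and $\ell(f \circ \beta) = \ell(h) \leq L L_0 = L \ell(\beta)$.

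For the lower bound I may assume $\ell(f \circ \beta) < \infty$ and run the symmetric argument with $f \circ \beta$ taking the role of $\beta$. Setting $\phi(t) = \ell((f \circ \beta)|_{[0, t]})$ produces a monotone continuous surjection $\phi \colon [0, 1] \to [0, L_1]$ with $L_1 = \ell(f \circ \beta)$; let $\tilde h \colon [0, L_1] \to Y$ be the corresponding $1$-Lipschitz arclength parametrization of $f \circ \beta$. Define $g \colon [0, L_1] \to X$ by $g(\tau) = \beta(t)$ for any $t \in \phi^{-1}(\{\tau\})$: this is well-defined, for whenever $\phi$ is constant on an interval $J$, $f \circ \beta$ is constant on $J$, and the lower inequality in (R\#) together with connectedness forces $\beta$ itself to be constant on $J$ (its image is a connected subset of a locally discrete fiber of $f$). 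Using the lower inequality in (R\#) at each $\beta(t_0)$, a verbatim repetition of the previous argument shows that $g$ is locally and then globally $L$-Lipschitz on $[0, L_1]$, so $\ell(g) \leq L L_1$. Since $\beta = g \circ \phi$ is a monotone continuous reparametrization of $g$, one has $\ell(\beta) = \ell(g) \leq L \ell(f \circ \beta)$.

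The main obstacle will be the lower bound, where some care is required to verify that $g$ is well-defined and continuous in the presence of possible non-injectivity of $\phi$, and that the reverse inequality in (R\#) transfers cleanly through the two reparametrizations; both points hinge on the local injectivity of $f$ supplied by (R\#). The upper bound, by contrast, is the familiar open-closed argument turning pointwise local Lipschitz estimates into a global Lipschitz bound once the arclength parametrization is in place.
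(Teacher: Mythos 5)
Your argument is correct, but for the lower bound it takes a genuinely different route from the one in the paper. The paper's proof fixes an arbitrary partition $K$ of $[0,1]$ and runs a continuous induction over the parameter interval: it builds, for increasing $a$, a finite refinement $J(a)$ of $K\cap[0,a]$ along which $\sum_i d(\beta(t_{i-1}),\beta(t_i))\leq L\sum_i d(f\beta(t_{i-1}),f\beta(t_i))$, the extension step at $\sup A$ being exactly the lower inequality of (R$^\#$) applied at $\beta(\sup A)$; summing and taking the supremum over $K$ gives $\ell(\beta)\leq L\,\ell(f\circ\beta)$ without ever constructing an auxiliary map. You instead reparametrize $f\circ\beta$ by arclength and build the map $g$ with $\beta=g\circ\phi$, proving it is $L$-Lipschitz by the same local-to-global (open--closed) principle. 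This is a valid alternative: the well-definedness and continuity of $g$ do go through, since the lower inequality of (R$^\#$) makes each fibre of $f$ discrete in the subspace topology, so a connected set on which $f\circ\beta$ is constant is sent by $\beta$ to a single point, and the one-sided limits of $\min\phi\inv(\{\tau\})$ and $\max\phi\inv(\{\tau\})$ give continuity of $g$. What your route buys is a slightly stronger structural conclusion (an $L$-Lipschitz arclength ``section'' of the image path); what it costs is precisely this collapsing-interval bookkeeping, which the partition argument avoids entirely. For the upper bound the two proofs coincide in substance: the paper simply quotes that an $L$-radial map between path-metric spaces is $L$-Lipschitz, which is the same local-to-global step you carry out along the arclength parametrization of $\beta$.
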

\begin{proof}
  Let $\beta \colon [0,1] \to X$ be a path.
  The inequality 
  $ \ell(f \circ \beta) \leq L \ell(\beta)$
  immediately follows from the fact that
  between path-metric spaces an $L$-radial mapping
  is an $L$-Lipschitz mapping.

  To prove the inequality
  $L \inv \ell(\beta) \leq \ell(f \circ \beta)$,
  let $K = \{ x_0, \ldots , x_k \} \subset [0,1]$ with
  \begin{align*}
    0 
    = x_0 
    < x_1
    < \cdots
    < x_k 
    = 1
  \end{align*}
  and denote
  \begin{align*}
    C 
    \colonequals \sum_{i=1}^k d(\beta(x_{i-1}),\beta(x_i)).
  \end{align*}
  Also let $A$ be the set of those points $a \in [0,1]$
  for which there exists a finite set 
  $J(a) \colonequals \{ t_0, \ldots , t_m \} $ such that
  \begin{enumerate}
  \item $0 = t_0 < \cdots < t_m = a$,
  \item $K \cap [0,a] \subset J(a)$, and
  \item 
    $\sum_{i=1}^m d(\beta(t_{i-1}),\beta(t_i)) \leq L\sum_{i=1}^k d(f(\beta(t_{i-1})),f(\beta(t_i)))$.
  \end{enumerate}
  We note that $0 \in A$, so $A$ is not empty. Thus the supremum of $A$
  exists and by applying the radiality 
  condition at the point $f(\beta(\sup A))$ we observe first that 
  $\sup A \in A$ and then that $\sup A = 1$.
  Now for $J(1) \equalscolon \{ t_0, \ldots , t_N \}$
  we have by the triangle inequality that
  \begin{align*}
    C
    &= \sum_{i=1}^k d(\beta(x_{i-1}),\beta(x_i)) 
    \leq \sum_{i=1}^N d(\beta(t_{i-1}),\beta(t_i)) \\ 
    &\leq L\sum_{i=1}^N d(f(\beta(t_{i-1})),f(\beta(t_i))) 
    \leq L \ell(f \circ \beta).
  \end{align*}
  Thus we have that $\ell(\beta) \leq L \ell(f \circ \beta)$.
\end{proof}

\begin{proof}[Proof of Theorem \ref{thm:BLDDefinitions}]
  We note that under any of the conditions $(i)$--$(iv)$,
  the mapping $f$ is an $L$-Lipschitz branched cover.
  We prove the theorem by two sequences of implications, 
  showing first 
  $(i) \Rightarrow (ii) \Rightarrow (iii) \Rightarrow (i)$
  and then completing the equivalence by showing
  $(i) \Rightarrow (iv) \Rightarrow (iii)$  
  
  The proof of the implication $(i) \Rightarrow (ii)$
  is essentially from \cite[Proposition 4.13 and Remark 4.16(c)]{HeinonenRickman}.
  To show that an $L$-BLD-mapping is an $L$-LQ mapping
  it suffices to show that the co-Lipschitz condition
  $B_Y(f(x),L\inv r) \subset f(B_X(x,r))$ holds, since the
  other inclusion is equivalent to the mapping being 
  $L$-Lipschitz. Since
  $Y$ is a proper space, the bounded set 
  $f(B_X(x,r))$ is precompact. Thus we
  may fix $z_0 \in \partial f(B_X(x,r))$ with
  \begin{align*}
    d(f(x),z_0)
    = d\left(f(x), \partial f(B_X(x,r))\right),
  \end{align*}
  and a geodesic $\beta \colon [0,1] \to Y$
  with $\beta(0) = f(x)$ and $\beta(1) = z_0$.
  By Lemma \ref{lemma:BLDPathLifting}
  there exists a total lift $\alpha$ of this path starting
  from $x$.
  On the other hand, since $f$ is an open map and
  \begin{align*}
    \alpha(1) 
    \in f \inv (\{ z_0 \})
    \subset f \inv \left( \partial f(B_X(x,r)) \right),
  \end{align*}
  we have $\alpha(1) \in \partial B_X(x,r)$.
  Thus 
  \begin{align*}
    \ell(\alpha) 
    \geq d(\alpha(0),\alpha(1))
    \geq d(x,\partial B_X(x,r))
    = r.
  \end{align*}
  By combining this with (BLD)
  and the fact that $\beta$ is a geodesic
  \begin{align*}
    d\left(f(x), \partial f(B(x,r))\right)
    = d(f(x),z_0)
    = \ell(\beta) 
    \geq L \inv \ell(\alpha)
    \geq L \inv r.
  \end{align*}
  Thus
  $B_Y(f(x),L \inv r) \subset f(B_X(x,r))$.

  Suppose next that $f$ is a discrete $L$-LQ-mapping. 
  Let $x \in X$. Since $f$ is discrete, there is a positive
  distance $r_0 \colonequals d\left(x, f \inv (\{ f(x) \}) \setminus \{ x \}\right))$.
  We claim that for any $r < r_0/2$ we have
  \begin{align*}
    d\left(f(x), (f \partial B_X(x,r))\right)
    \geq L \inv r.
  \end{align*}
  To see this let $z \in \partial B_X(x,r)$ and note that 
  $B_X(z,r) \cap f \inv (\{ f(x) \}) = \emptyset$, 
  so $f(x) \notin f(B_X(z,r))$. Since
  $f (B_X(z,r))$ contains the ball $B_Y(f(z),L \inv r)$, 
  we have $d(f(z),f(x)) \geq L \inv r$.
  Thus $f$ is $L$-radial at $x$, since the condition
  $L(x,f,r) \leq Lr$ is equivalent to the $L$-Lipschitz 
  condition.

  By Lemma \ref{lemma:RadialIsBLD} an $L$-radial branched cover
  is $L$-BLD.
  Thus we have shown that the conditions $(i)$, $(ii)$ and $(iii)$
  are equivalent.

  The proof that an $L$-BLD mapping is $L$-coradial, let $x \in X$ and
  fix $r_x$ as in Lemma \ref{lemma:TopologicalNormalDomainLemma}.
  Denote
  \begin{align*}
    r_0 
    \colonequals
    \min \{ r_x, d\left(x, f \inv (\{ f(x) \}) \setminus \{ x \} \right) /(2L) \}
  \end{align*}
  and fix $r < r_0$.
  Let $z \in \partial B_Y(f(x),r)$ and let 
  $\beta_z \colon [0,1] \to \overline B_Y(f(x),r)$ be a geodesic with
  $\beta_z(0) = f(x)$ and $\beta_z(1) = z$.
  By Lemma \ref{lemma:BLDPathLifting} 
  for any $w \in U(x,f,r) \cap f \inv (\{ z \})$
  there exists a lift $\tilde \beta_z$ of $\beta_z$
  with $\tilde \beta_z (0) = x$ and $\tilde \beta_z (1) = w$.
  Since $f$ is $L$-BLD, $\ell(\tilde \beta_z) \leq Lr$.
  Thus for all $r < r_0$ we have $L^\ast(x,f,r) \leq Lr$.
  Since the condition $l^\ast(x,f,r) \geq L \inv r$ is equivalent
  to the $L$-Lipschitz condition, $f$ is $L$-coradial at $x$.

  Suppose finally that $f$ is a branched cover which satisfies the co-radiality
  condition (R*) with constant $L \geq 1$. Fix $x \in X$ and 
  let $r_0 > 0$ be such that for all $r < r_0$
  the normal neighbourhoods $U(x,f,r)$ satisfy
  (R*). By Corollary \ref{coro:ConnectedPreImages} we have 
  for all $r < r_0$ that 
  \begin{align*}
    U(x,f,r) 
    = U(x,f,r_0) \cap f \inv ( B_Y(f(x),r)).
  \end{align*}
  Then for each point $z \in U(x,f,r_0)$
  there exists a radius $r_z > 0$ such that
  $z \in \partial U(x,f,r_z)$.
  The condition (R*) then implies that 
  $d(z,x) \in [L \inv r , L r]$, so the mapping $f$ is $L$-radial.
\end{proof}

\section{Limit theorems for BLD-mappings }
\label{sec:Limits}

To show that the pointwise limit of 
$L$-LQ mappings $f_j \colon X \to Y$
between proper metric spaces
is an $L$-LQ mapping is a straightforward
calculation. In this section we
show that a similar limit result holds
in the setting of sequences of pointed
mapping packages. 

A \emph{pointed mapping package} is a triple
$\left((X,x_0),(Y,y_0),f\right)$ where $X$ and $Y$
are locally compact and complete 
path-metric spaces
having fixed base-points $x_0 \in X$, $y_0 \in Y$,
and $f \colon X \to Y$ is a continuous
mapping satisfying $f(x_0) = y_0$.
We define the convergence of a sequence of pointed
mapping packages as in 
\cite[Definition 8.18 and Lemma 8.19]{DavidSemmes},
see also
\cite[Definition 3.8]{KleinerMacKay} and
\cite[Definition 2.1]{GuyCDavid-Tangents}.
For $A \subset X$ we denote $N_\eps (A) \colonequals B_X(A, \eps)$.
A map $\phi \colon (X,x_0) \to (Y,y_0)$ between 
pointed metric spaces is called an 
\emph{$\eps$-quasi-isometry} if
\begin{enumerate}[(i)]
\item for all $a,b \in B_X(x_0, \eps \inv )$ we have
  $| d(\phi(a),\phi(b)) - d(a,b) | < \eps$, and
\item for all $\eps \leq r \leq \eps \inv $ we have 
  $N_\eps(\phi(B_X(x_0,r))) \supset B_Y(y_0, r- \eps)$.
\end{enumerate}

\begin{definition}\label{def:GH}
  \emph{Pointed mapping packages 
  $\left((X_j,x_j),(Y_j,y_j),f_j\right)$ 
  for $j \in \N$ converge to
  $\left((X,x_0),(Y,y_0),f\right)$} if the following conditions hold:
  \begin{enumerate}[(GH-i)]
  \item For every $r > 0$ and $i \in \N$
    there exists $\eps_i^{(r)} > 0$ and 
    $\eps_i^{(r)}$-quasi-isometries
    \begin{align*}
      & g_i^{(r)} \colon B_{X_i}(x_i,r) \to N_{\eps_i^{(r)}} \left( g_i^{(r)} \left(B_{X_i}(x_i,r)\right) \right) \subset X
      \quad \text{and} \\
      & h_i^{(r)} \colon B_{Y_i}(y_i,r) \to N_{\eps_i^{(r)}} \left( h_i^{(r)} \left(B_{Y_i}(y_i,r)\right) \right) \subset Y 
    \end{align*}
    so that $\eps_i^{(r)} \to 0$
    and for all $i \in \N$, $g_i^{(r)}(x_i) = x_0$, $h_i^{(r)}(y_i) = y_0$,
    \begin{align*}
      B(x_0,r - \eps_i^{(r)}) &\subset N_{\eps_i^{(r)}} \left( g_i^{(r)} \left(B_{X_i}(x_i,r) \right)\right) 
      \quad \text{and}\\
      B(y_0,r - \eps_i^{(r)}) &\subset N_{\eps_i^{(r)}} \left( h_i^{(r)} \left(B_{Y_i}(y_i,r) \right)\right).
    \end{align*}
    
  \item For any $x \in X$ and all $r > d(x,x_0)$
    we have
    $h_i^{(r)}(f_i(a_i))  \to f(a)$ as $i \to \infty$
    whenever $a_i \in X_i$ is a sequence of points with $g_i^{(r)}(a_i) \to a$
    as $i \to \infty$.
  \end{enumerate}
\end{definition}
Note that the condition (GH-i) in Definition \ref{def:GH}
is equivalent to saying that 
$(X_j, x_j) \to (X,x_0)$ and
$(Y_j, y_j) \to (Y,y_0)$ in the Gromov-Hausdorff sense, see
e.g.\ \cite{DavidSemmes}. For fixed spaces, the condition
(GH-ii) is just the pointwise convergence of mappings.

\begin{lemma}\label{lemma:LQLimit}
  Let $\left((X_j,x_j), (Y_j,y_j), f_j \right)$
  be a sequence of mapping packages converging to a 
  mapping package
  $\left((X,x_0), (Y,y_0), f \right)$.
  If all the mappings $f_j$ are $L$-LQ, then so is $f$.
\end{lemma}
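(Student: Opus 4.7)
The plan is to transfer the $L$-LQ property from the mappings $f_j$ to the limit $f$ via the quasi-isometries of Definition \ref{def:GH}. Fix an arbitrary $x \in X$ and $r > 0$; I will verify both inclusions of (LQ) for $f$ at $x$ with radius $r$. Choose a scale $R > d(x, x_0) + r + 1$ and abbreviate $g_j \colonequals g_j^{(R)}$, $h_j \colonequals h_j^{(R)}$, $\eps_j \colonequals \eps_j^{(R)}$. Using the near-surjectivity of $g_j$ from (GH-i), pick $\hat x_j \in B_{X_j}(x_j, R)$ with $g_j(\hat x_j) \to x$; by (GH-ii) this immediately yields $h_j(f_j(\hat x_j)) \to f(x)$.

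For the inclusion $f(B_X(x, r)) \subset B_Y(f(x), Lr)$, given $a \in B_X(x, r)$ I use the near-surjectivity of $g_j$ to produce $\hat a_j \in B_{X_j}(x_j, R)$ with $g_j(\hat a_j) \to a$; the quasi-isometry bound then forces $d(\hat x_j, \hat a_j) \to d(x, a)$. Applying the Lipschitz half of the $L$-LQ property to $f_j$ gives $d(f_j(\hat x_j), f_j(\hat a_j)) \leq L\, d(\hat x_j, \hat a_j)$, and passing to the limit via the quasi-isometry bound for $h_j$ together with (GH-ii) yields $d(f(x), f(a)) \leq L\, d(x, a) \leq L r$, as desired.

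For the co-Lipschitz inclusion $B_Y(f(x), L \inv r) \subset f(B_X(x, r))$, fix $y \in B_Y(f(x), L \inv r)$, set $\rho \colonequals d(f(x), y)$, and choose an intermediate radius $r' \in (L \rho, r)$. Use (GH-i) to pick $\hat y_j \in B_{Y_j}(y_j, R)$ with $h_j(\hat y_j) \to y$; combining this with $h_j(f_j(\hat x_j)) \to f(x)$ and the quasi-isometry bounds for $h_j$ forces $d(f_j(\hat x_j), \hat y_j) \to \rho$, so that $\hat y_j \in B_{Y_j}(f_j(\hat x_j), L \inv r')$ for $j$ large. The co-Lipschitz half of (LQ) for $f_j$ then produces $\hat a_j \in B_{X_j}(\hat x_j, r')$ with $f_j(\hat a_j) = \hat y_j$. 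Since the sequence $(g_j(\hat a_j))$ lies in a bounded subset of $X$ (because $d(g_j(\hat a_j), g_j(\hat x_j))$ is within $\eps_j$ of $d(\hat a_j, \hat x_j) < r'$, while $g_j(\hat x_j) \to x$), and $X$ is proper by Hopf-Rinow, I extract a subsequence with $g_j(\hat a_j) \to a$ for some $a \in X$ with $d(a, x) \leq r' < r$. Then (GH-ii), applied to the sequence $(\hat a_j)$, combined with $h_j(\hat y_j) = h_j(f_j(\hat a_j)) \to f(a)$ and $h_j(\hat y_j) \to y$, forces $f(a) = y$, so $y \in f(B_X(x, r))$.

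The main obstacle is the co-Lipschitz half, where one must simultaneously ensure that the approximating pre-images $\hat a_j$ have images under $g_j$ lying in a uniformly bounded (hence precompact) subset of $X$, and that the limit point $a$ lands strictly inside $B_X(x, r)$ rather than on its boundary. The first is handled by the choice $R > d(x, x_0) + r + 1$, which guarantees that all the relevant points remain in the domain of the quasi-isometries $g_j, h_j$; the second is handled by the slack $r' \in (L \rho, r)$, since a compactness extraction only yields $d(a, x) \leq r'$, which is insufficient without strict inequality $r' < r$.
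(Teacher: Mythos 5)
Your proof is correct and follows essentially the same route as the paper's: both establish the Lipschitz half by transferring pairs of points through the quasi-isometries, and both handle the co-Lipschitz half by introducing a slack radius strictly inside $r$, pulling the target point back to $Y_j$, applying the co-Lipschitz property of $f_j$, and using properness of $X$ to extract a limiting pre-image identified via (GH-ii). The two minor technical points you flag (keeping all approximating points inside the domain of the quasi-isometries, and the need for strict inequality after the compactness extraction) are exactly the ones the paper's proof also addresses.
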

\begin{proof}
  We show first that the limiting map $f$ is $L$-Lipschitz.
  Let $a,b \in X$ and fix
  a radius $R > 0$ with $a,b \in B_X(x_0,R)$
  and $f(a), f(b) \in B_Y(y_0,R)$.
  Fix two sequences of points 
  \begin{align*}
    a_i \in (g_i^{(R)})\inv \left(B_X(a,\eps_i^{(R)})\right) \subset X_i
    \quad
    \text{ and }
    \quad
    b_i \in (g_i^{(R)})\inv \left(B_Y(b,\eps_i^{(R)})\right) \subset X_i.
  \end{align*}
  By (GH-ii) we have
  $h_i^{(R)}(f_i(a_i)) \to f(a)$ and
  $h_i^{(R)}(f_i(b_i)) \to f(a)$, so
  since each $f_i$ is $L$-Lipschitz
  the triangle inequality yields
  \begin{align*}
    d_Y(f(a),f(b))
    \leq L d_{X}(a,b) + 2 \eps_i^{(R)}
  \end{align*}
  for all $i \in \N$. Thus $f$ is $L$-Lipschitz.

  To prove the claim it now suffices to show that
  $B_Y(f(x),r/L) \subset f(B_X(x,r))$ for each 
  $x \in X$ and $r > 0$. Let
  $z_0 \in B_Y(f(x),r/L)$. Fix a radius
  $r_0 < r$ such that $z_0 \in B_Y(f(x),r_0/L)$.
  Let also $R = 2L (d(x_0,x) + r)$ and let
  $(\eps_i^{(R)})$, $(g_i^{(R)})$ and $(h_i^{(R)})$ be as in
  Definition \ref{def:GH}.
  For each $i \in \N$ we take 
  \begin{align*}
    c_i 
    \in (g_i^{(R)})\inv \left(B_X(x,\eps_i^{(r)}) \right)
    \subset X_i.
  \end{align*}
  By (GH-ii) we have 
  $h_i(f_i(c_i)) \to f(x)$, so 
  $
  \delta_i 
  \colonequals d_Y(h_i(f_i(c_i)),f(x))
  \to 0
  $
  as $i \to \infty$.
  Likewise we fix for all $i \in \N$ a point
  \begin{align*}
    z_i 
    \in (h_i^{(R)})\inv \left(B_Y(z,\eps_i^{(R)})\right)
    \subset Y_i.
  \end{align*}
  Now the triangle inequality yields 
  $ d_{Y_i}(f(c_i),z_i) \leq r_0/L + (\delta_i + 2\eps_i^{(R)})$,
  since $h_i$ is an $\eps_i^{(R)}$-quasi-isometry.
  
  Since $f_i$ is $L$-LQ, we have
  \begin{align*}
    z_i
    \in B_{Y_i}(f_i(c_i), r_0 / L + (\delta_i +  2\eps_i^{(R)}))
    \subset f \left(B_{X_i}(c_i, r_0 +  L(\delta_i + 2 \eps_i^{(R)}))\right).
  \end{align*}
  Thus there exists, for each $i \in \N$, a point 
  \begin{align*}
    a_i 
    \in B_{X_i}(c_i, r_0 + L ( \delta_i + 2\eps_i^{(R)}))
    \cap f_i \inv (\{ z_i \}).
  \end{align*}

  Since $B_{X}(x_0,R_0)$ is precompact, we can pass to 
  subsequences $(\eps_j^{(R)})$, $(g_j^{(R)})$ and $(h_j^{(R)})$ 
  and assume that the sequence $(g_j^{(R)}(a_j))$ in $X$ 
  converges to a point
  in $\overline{B}_{X}(x_0,R_0)$. 
  Finally we note that since each $g_j^{(R)}$ 
  is an $\eps_j^{(R)}$-quasi-isometry, we have
  \begin{align*}
    g_j^{(R)}(a_j) 
    \in B_{X}(x,r_0 + L( \delta_j + 2\eps_j^{(R)}) 
    + 2\eps_j^{(R)}),
  \end{align*}
  so $a \in \overline{B}_X(x,r_0)$.
  Thus by (GH-ii), $f(a) = z$.
  This proves that we have
  \begin{align*}
    z
    \in f (\overline{B}_X(x,r_0))
    \subset f (B_X(x,r)).
  \end{align*}
  Since $z$ was arbitrary,
  $B_Y(f(x),L \inv r) \subset f (B_X(x,r))$
  and $f$ is $L$-LQ.
\end{proof}

Lemma \ref{lemma:LQLimit} yields immediately the proof of
Theorem \ref{thm:BLDLimit} when combined with our
characterization Theorem \ref{thm:BLDDefinitions}.
\begin{proof}[Proof of Theorem \ref{thm:BLDLimit}.]
  By the characterization Theorem
  \ref{thm:BLDDefinitions} the classes of 
  $L$-BLD mappings and discrete $L$-LQ mappings equal.
  By Lemma \ref{lemma:LQLimit} $f$
  is $L$-LQ, and thus $L$-BLD.
\end{proof}

\begin{remark}
  Also the ultralimit of a sequence of $L$-LQ 
  mappings is $L$-LQ; see
  \cite[Lemma 3.1]{LeDonnePankka}. Since completeness,
  local compactness and a 
  path-metric
  pass to ultralimits, see e.g.\ \cite{Kapovich-Book},
  results of this section hold also when the convergence
  of mapping packages is replaced by an ultralimit.
  Likewise Theorem \ref{thm:BLDLimit2} has a corresponding 
  ultralimit version. The proof is similar in the ultralimit
  setting as is given here for 
  pointed mapping package convergence.
\end{remark}

The limit of a sequence of $L$-BLD mappings
need not be discrete in general; for example
a sequence of $1$-BLD-mappings $S^1 \to S^1(\frac{1}{k})$,
$z \mapsto \frac{1}{k} z^k$, converges to a constant map
$S^1 \to \{ 0 \}$.
In what follows we consider
the setting of generalized
manifolds of type $A$, where the existence of certain uniform
local multiplicity bounds enable us to 
show that the limit of $L$-BLD mappings is always discrete.

\subsection{Generalized manifolds}

Throughout this section we
assume that $M$ and $N$ are generalized $n$-manifolds
having a complete path-metrics. 
For the definition of generalized manifolds and
their basic theory we refer to \cite{HeinonenRickman}.
Note that generalized manifolds are locally compact.

The following lemma is an elementary observation in the
local value distribution theory of BLD-mappings. We assume
it is known to the specialists in the field but have not
found it recorded in the literature.
\begin{lemma}\label{lemma:BabyValueDistribution}
  Let $f \colon M \to N$ be an $L$-BLD-mapping between
  two generalized manifolds equipped with a complete
  path-metric.
  Then for any two points $x,y \in N \setminus f (B_f)$
  there exists a bijection
  $\psi_f \colon f \inv (\{ x \}) \to f \inv (\{ y \})$
  satisfying $d_M(a,\psi_f(a)) \leq L d_N(x,y)$
  for all $a \in f \inv (\{ x \})$.
\end{lemma}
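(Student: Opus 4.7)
The plan is to construct the bijection by path-lifting along paths in $N$ that carefully avoid $f(B_f)$, where the lifts are automatically unique. The key background input is the standard structural fact for BLD-mappings (indeed, for discrete open mappings) between generalized $n$-manifolds: both $B_f$ and $f(B_f)$ are closed sets of topological dimension at most $n-2$. Because $f(B_f)$ is of codimension at least $2$ in $N$, any geodesic in $N$ can be locally rerouted around its intersections with $f(B_f)$ at the cost of arbitrarily little additional length; hence for every $\eps > 0$ the points $x, y \in N \setminus f(B_f)$ can be joined by a rectifiable path $\beta_\eps \colon [0,1] \to N \setminus f(B_f)$ with $\ell(\beta_\eps) \leq d_N(x,y) + \eps$.

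Given such a $\beta_\eps$, for each $a \in f \inv (\{x\})$ Lemma \ref{lemma:BLDPathLifting} produces a total lift $\tilde{\beta}_{\eps,a}$ of $\beta_\eps$ starting at $a$. Because $|\beta_\eps|$ avoids $f(B_f)$ and $a \notin B_f$, the entire lift stays in $M \setminus B_f$, where $f$ is a local homeomorphism. Standard covering-space uniqueness then forces $\tilde{\beta}_{\eps,a}$ to be the unique lift with initial point $a$, and the assignment
\[
\Psi_\eps \colon f \inv (\{x\}) \to f \inv (\{y\}), \qquad \Psi_\eps(a) \colonequals \tilde{\beta}_{\eps,a}(1),
\]
is a bijection whose inverse is obtained by lifting $\beta_\eps$ traversed in the reverse direction. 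The BLD condition gives the length bound
\[
d_M(a, \Psi_\eps(a)) \leq \ell(\tilde{\beta}_{\eps,a}) \leq L \ell(\beta_\eps) \leq L \bigl( d_N(x,y) + \eps \bigr).
\]

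It remains to pass to the limit $\eps \to 0$. For each fixed $a$, the possible values $\Psi_\eps(a)$ lie in $\overline{B}_M(a, 2L d_N(x,y)) \cap f \inv (\{y\})$ for all small $\eps$; this is a finite set, since closed balls in $M$ are compact by Hopf-Rinow and $f \inv (\{y\})$ is closed and discrete. The set $f \inv (\{x\})$ is countable (closed discrete subset of the separable space $M$), so a diagonal argument extracts a sequence $\eps_j \to 0$ along which $\Psi_{\eps_j}$ and its inverse $\Psi_{\eps_j} \inv$ stabilize pointwise to maps $\psi_f$ and $\phi_f$ on $f \inv (\{x\})$ and $f \inv (\{y\})$, respectively. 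Since each $\Psi_{\eps_j}$ is a bijection and stabilization occurs after finitely many indices at each element, $\psi_f$ and $\phi_f$ are mutual inverses, and the length bound passes to the limit to give $d_M(a, \psi_f(a)) \leq L d_N(x,y)$.

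The main obstacle is the construction of the approximating paths $\beta_\eps$ avoiding $f(B_f)$; this depends on the topological codimension-$2$ estimate for $f(B_f)$, which is the standard but nontrivial structural input for branched covers between generalized $n$-manifolds and is the reason the lemma is stated in this setting rather than for arbitrary complete path-metric spaces.
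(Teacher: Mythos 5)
Your overall architecture (lift a near-geodesic from each point of $f^{-1}(\{x\})$, read off the endpoints, pass to a limit) is workable in principle, but the step you yourself flag as the main obstacle is a genuine gap, for two separate reasons. First, the ``standard structural fact'' that $B_f$ and $f(B_f)$ have topological dimension at most $n-2$ is a Chernavskii--V\"ais\"al\"a--type statement whose proof uses honest manifolds; in the generality of this lemma (generalized manifolds carrying a complete path-metric, not assumed to be of type $A$) it is not available off the shelf --- note that the paper's own remark after the proof of Theorem \ref{thm:BLDLimit2} points out that even the measure-theoretic smallness of $B_f$ is conjectural outside the type $A$ setting. Second, and more decisively, topological codimension $\geq 2$ carries no metric information: a closed set of topological dimension $0$ can have positive measure, and nothing you cite prevents it from forcing long detours, i.e.\ from destroying $(1+\eps)$-quasiconvexity of its complement. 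So the existence of paths $\beta_\eps$ from $x$ to $y$ in $N\setminus f(B_f)$ with $\ell(\beta_\eps)\leq d_N(x,y)+\eps$ does not follow from the stated input, and without these paths the rest of the construction (unique lifting off the branch set, the $\eps\to 0$ diagonal argument) cannot be started.

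The paper's proof avoids the issue entirely: it lifts the geodesic $\beta$ itself, straight through $f(B_f)$, using Rickman's \emph{maximal sequence of path liftings}. The hypotheses $x,y\notin f(B_f)$ are used only at the two endpoints: they force each $a\in f^{-1}(\{x\})$ to carry exactly one lift of the maximal sequence (so $\psi_f$ is well defined) and each $b\in f^{-1}(\{y\})$ to be the terminal point of at most one such lift (so $\psi_f$ is injective), with surjectivity following by reversing $\beta$; since $\beta$ is a geodesic, (BLD) gives $d_M(a,\psi_f(a))\leq\ell(\alpha_a)\leq L\,\ell(\beta)=L\,d_N(x,y)$ with no limiting procedure. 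For what it is worth, your concluding diagonal/stabilization argument is sound as stated (fibers meet compact balls in finite sets, so $\Psi_{\eps_j}$ and $\Psi_{\eps_j}^{-1}$ stabilize pointwise to mutually inverse maps, and the bound passes to the limit), but it is only needed because of the $\eps$ you introduced, and it cannot rescue the missing construction of $\beta_\eps$.
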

\begin{proof}
  The claim follows immediately by connecting $x$ and $y$
  with a geodesic $\beta$ and taking the maximal sequence
  of $f$ liftings of $\beta$, see \cite{Rickman-book}. 
  Indeed, to define the bijection $\psi_f$, let 
  $\beta \colon [0,1] \to N$ be a geodesic with 
  $\beta(0) = x$ and $\beta(1) = y$. For each
  $a \in f \inv (\{ x \})$ the path $\beta$ has exactly
  one lift $\alpha_a$ in the maximal sequence of path liftings
  of $\beta$ under $f$ starting from $a$. 
  By Lemma \ref{lemma:BLDPathLifting} 
  maximal lifts of $\beta$ are total, and we define
  $\psi_f(a) = \alpha_a(1) \in f \inv (\{ y \})$.
  Since $y \in N \setminus f (B_f)$ the mapping $\psi_f$
  is injective, so by
  symmetry the mapping $\psi_f$ is bijective. Since $\beta$
  is a geodesic, $d_X(a,\psi_f(a)) \leq L d_Y(x,y)$ by
  (BLD).
\end{proof}

In the setting of generalized manifolds the multiplicity
bounds of Heinonen and Rickman \cite{HeinonenRickman} 
are at our disposal for the proof of Theorem \ref{thm:BLDLimit2}.
Note that the proof of the following theorem shows that 
the limiting map is discrete in a quantitative sense that
$f$ has the same uniform
local multiplicity bounds as the mappings in the sequence
$(f_i)$. This resut generalizes
the authors previous result
\cite[Theorem 1.2]{Luisto-NoteOnLocalToGlobal} and the proof here
is similar.
\begin{proof}[Proof of Theorem \ref{thm:BLDLimit2}]
  By Lemma \ref{lemma:LQLimit}
  we know that the limiting
  map is $L$-LQ. Thus it suffices, by Theorem
  \ref{thm:BLDDefinitions}, to show that $f$ is discrete.
  We show that for each ball $B_{M}(x_0,r_0)$
  we have $N(f(x_0), f , B_{M}(x_0,r_0)) < \infty$.

  By \cite[Theorem 6.8]{HeinonenRickman}, we have for any $j \in \N$,
  any $x \in M$ and any radius $r > 0$
  \begin{align*}\tag{N}
    N(f_j(x), f_j , B_{M_j}(x,r))
    \leq (L c_{M})^n
    \frac{\mathcal{H}^n ( B_{M_j}(x,\lambda r))}{\mathcal{H}^n(B_{N_j}(f_j(x),(\lambda -1)r/L c_{N}))}
  \end{align*}
  for any $\lambda > 1$,
  where $\mathcal{H}^n$ is the Hausdorff 
  $n$-measure and 
  $c_M$ and $c_N$ are the quasi-convexity 
  constants of the spaces $M$ and $N$, respectively.
  We fix $\lambda = 2$ and note that 
  $c_M = c_N = 1$ for path-metric spaces.
  Since we assumed the spaces $M_j$ and $N_j$ to have uniform constants,
  we have for any radius $r$ a constant $N_0(r)$ depending only on $r$ such that
  \begin{align}
    N(f_j(x), f_j , B_{M_j}(x,r)) 
    \leq N_0(r) \label{eq:N}
  \end{align}
  for any $j \in \N$ and $x \in M_j$.

  Let $x_0 \in M$ and fix a radius $r_0 > 0$.
  We show that
  \begin{align*}
    N(f(x_0), f , B_{M}(x_0,r_0)) 
    \leq N_0(r_0) \equalscolon N_0.
  \end{align*}
  Suppose towards contradiction that 
  \begin{align*}
    N(f(x_0), f , B_{M}(x_0,r_0)) 
    \geq N_0 + 1.
  \end{align*}
  Then there exists $y_0 \in f(B_M(x_0,r_0))$
  having at least $N_0 + 1$ preimages
  in $B_M(x_0,r_0)$, that is,
  \begin{align*}
    B_M(x_0,r_0) \cap f \inv (\{ y_0 \})
    \supset \{ a_0, \ldots , a_{N_0} \}
  \end{align*}
  with $a_i$ mutually disjoint.
  Denote
  \begin{align*}
    \delta \colonequals
    \min_{i = 0, \ldots, N_0} \{ d_M(a_i, \{ a_k \}_{k \neq i}), d_M(a_i, \partial B_M(x_0,r_0)) \}
  \end{align*}
  and note that the balls $B_M(a_i, \delta / 2)$ are disjoint.

  Take $R_0 = 2r_0$ and let $(\eps_j^{(R_0)})$, $(g_j^{(R_0)})$ and 
  $(h_j^{(R_0)})$ be the sequences associated to $R_0$
  in (GH-i).
  For each $j \in \N$ and all $i = 0, \ldots , N_0$
  we fix points
  \begin{align*}
    c_i^j
    \in (g_j^{(R_0)})\inv ( B_M(a_i,\eps_j) )
    \subset M_j.
  \end{align*}
  Since the mappings $g_j^{(R_0)}$ are $\eps_j^{(R_0)}$-quasi-isometries,
  the balls $B_{M_j}(c_i^j,\delta/4)$ will be disjoint in $M_j$ 
  for $\eps_j^{(R_0)} < \delta/4$.
  By (GH-ii) we have, for all $i = 0, \ldots , N_0$,
  that $h_j^{(R_0)} (f_j (c_i^j)) \to f(a_i) = y_0$ as
  $i \to \infty$.
  Denote 
  \begin{align*}
    \delta_i \colonequals d_N(h_j^{(R_0)} (f_j (c_i^j)),y_0).
  \end{align*}
  Since
  the mappings $h_j^{(R_0)}$ are $\eps_j^{(R_0)}$-quasi-isometries,
  the triangle inequality yields that
  \begin{align*}
    d_{N_j}(f_j(c_i^j),f_j(c_k^j)) 
    \leq 4 \eps_j^{(R_0)} + \delta_i
  \end{align*}
  for all $i,k \in \{ 0, \ldots, N_0 \}$.
  Thus, for $\eps_j^{(R_0)} < \delta/ (24 L)$,
  there exists a point 
  \begin{align*}
    z_0 \in
    \bigcap_{k = 0}^{N_0} B_{N_j}(f(c_i^j), \delta/(4L))
    \subset \bigcap_{k = 0}^{N_0} fB_{M_j}(c_i^j, \delta/4).
  \end{align*}
  Since 
  the balls $B_{M_j}(c^j_i,\delta/4)$ 
  are disjoint we have
  \begin{align*}
    \# \left(B_{M_j}(x_0,r_0) \cap f_j \inv (\{ z_0 \}) \right)
    \geq N_0+1.
  \end{align*}
  This is a contradiction with \eqref{eq:N}, so 
  $N(f(x_0),f,r_0) \leq N_0$ and $f$ is discrete.
\end{proof}

\begin{remark}
  The multiplicity bound (N) of Heinonen and Rickman
  holds more generally in the setting of
  Ahlfors $Q$-regular generalized manifolds
  equipped with a complete path-metric if
  $\mathcal{H}^Q(B_f) = 0$. Note, however, that this
  is a very strong assumption; indeed 
  the Heinonen-Rickman conjecture
  \cite[Theorem 6.4]{HeinonenRickman} asks whether the branch
  set of a BLD-mapping $f \colon M \to N$ has zero measure
  in any setting where $M$ and $N$ are not quasi-convex
  generalized $n$-manifolds of type A.
  
  Under the assumption $\mathcal{H}^Q(B_f) = 0$,
  the bound (N) follows from 
  standard covering arguments combined with the co-radiality
  condition (R*) and Lemma \ref{lemma:BabyValueDistribution}.
  Thus also the proof of Theorem \ref{thm:BLDLimit2}
  goes through in this setting.
\end{remark}


\newcommand{\etalchar}[1]{$^{#1}$}
\def\cprime{$'$}\def\cprime{$'$}

\end{document}